\documentclass[12pt,psamsfonts]{amsart}
\usepackage{amsthm}
\usepackage{amssymb}
\usepackage{amsmath}
\usepackage{graphicx}
\usepackage{amscd}
\usepackage{amsfonts}
\usepackage{amsbsy}
\usepackage[T1]{fontenc}
\usepackage[english]{babel}
\usepackage{xcolor}
\usepackage{a4wide}
\usepackage{calligra}
\usepackage{mathtools}

\usepackage{nicematrix}
\usepackage{arydshln}
\usepackage{vaucanson-g}
 \usepackage{tikz}
\usetikzlibrary{matrix,decorations.pathreplacing}

\usepackage{epsfig}
\usepackage{amssymb}

\newtheorem{theorem}{Theorem}

\newtheorem{proposition}[theorem]{Proposition}

\newcommand{\N}{\mathbb N}

\newcommand{\uu}{{\bf u}}
\newcommand{\vv}{{\bf v}}

\newcommand{\A}{{\mathcal A}}

\newcommand{\F}{\mathcal F}

\newcommand{\G}{\mathcal G}

\author[B. San Mart\'in]{Bernardo San Mart\'in}
\address{ 
Departamento de Matem\'aticas, Universidad Cat\'olica del Norte, Antofagasta, Chile}
\email{sanmarti@ucn.cl}

\author[V.F. Sirvent]{V\'{\i}ctor F. Sirvent}
\address{ 
Departamento de Matem\'aticas, Universidad Cat\'olica del Norte, Antofagasta, Chile}
\email{victor.sirvent@ucn.cl}

\title{On the Lorenz-Fibonacci sequences and substitutions}

\date{\today}

\keywords{substitutions, integer sequences, morphisms on words, Lorenz attractor, shift spaces}

\subjclass[2010]{ 11B85, 11B39, 68R15, 37B10, 68Q45}

\begin{document}

\begin{abstract}
In the present article, we consider two families of integer sequences, 
the $(k,r)$-Lorenz-Fibonacci sequences of the first and second kind, whose characteristic polynomial is
$$
p_{k,r}(x)=x^k-x^{k-1}-\cdots- x^r+x^{r-1}+\cdots +x+1,
$$
where $k\geq 2r+1$, and $k\geq 3$.
These families include the well-known $k$-bonacci sequence, when $r=0$.
These sequences arise naturally in the study of the dynamics of the Lorenz attractor. 
We introduce  two families of substitutions (in an alphabet of $k$ symbols) so that they are associated with each of the families of integer sequences and share the same polynomial.  We study the main combinatorial properties of these substitutions.
\end{abstract}

\maketitle

\section{Introduction}

The geometrical model  of  the Lorenz attractor, introduced by Guckenheimer~(\cite{G,afraimovich,GW}), plays  a fundamental role in the theory  of dynamical systems, see for instance~\cite{viana,ghys} and  within references.
In~\cite{SS},
the authors  studied the symbolic dynamics of this model satisfying certain combinatorial and dynamical  assumptions and showed that a special class of shift spaces  arises.
These shifts are  associated with the family of polynomials
\begin{equation}\label{eqn:poly}
p_{k,r}(x):=x^k-x^{k-1}-\cdots- x^r+x^{r-1}+\cdots +x+1,
\end{equation}
where $k,r$ are non-negative integers such that $k\geq 2r+1$ and $k\geq 3$.

In this article, we study two different families of integer sequences having~(\ref{eqn:poly}) as characteristic polynomial. We show some   combinatorial properties of these integer sequences. 
An important fact about these families is that they include the well-known $k$-bonacci sequence. Some members of these families of sequences are not listed in~\cite{OEIS}, at the time of writing this article.

We also present a family of morphisms or substitutions on an alphabet of $k$ symbols that  reproduces the
integer sequences mentioned and these substitutions
are associated with the dynamics of the Lorenz shifts. We explore some combinatorial and dynamical properties of these morphisms in Theorems~\ref{thm:1} and~\ref{thm:2}.

The topics discussed in the article involve integer sequences, combinatorics on words, substitution dynamical systems and potential connections with the dynamics of the Lorenz attractor.
This article is organized as follows: in Section~\ref{s:sequence} we present the Lorenz-Fibonacci sequences of first and second kind, and show some basic properties and compute their generating function.
In Section~\ref{s:zeta} we introduce the preliminary concepts about substitutions and present the substitution $\zeta_{k,r}$ which is associated with the $(k,r)$-Lorenz-Fibonacci sequence of the first kind and show in Theorem~\ref{thm:1} some of its basic combinatorial properties.
In Section~\ref{s:phi} we define the substitution $\phi_{k,r}$   which is associated with the $(k,r)$-Lorenz-Fibonacci sequence of the second kind and state in Theorem~\ref{thm:2} some of its basic combinatorial properties. 
These two families  of substitutions are structurally different (their corresponding graphs are not isomorphic), in spite of having the same incidence matrix.
We conclude with Sections~\ref{s:remarks} and~\ref{s:conclusions}, where we list some remarks,  open problems, and summarize the main results of the article.

\section{The $(k,r)$ Lorenz-Fibonacci sequences}\label{s:sequence}

The  \emph{$(k,r)$-Lorenz-Fibonacci recurrence relation} is the
linear  recurrence relation given in~(\ref{eqn:lr}), with  $k\geq 2r+1$, $k\geq 3$, and $r\geq 0$; which  has $p_{k,r}(x)$ as its characteristic polynomial.
\begin{multline}\label{eqn:lr}
L_{k,r}(k+n)=L_{k,r}(n+k-1)+\cdots +L_{k,r}(n+r)-\\
-L_{k,r}(n+r-1)-\cdots-L_{k,r}(n+1)
-L_{k,r}(n).
\end{multline}
Note that for $r=0$,~(\ref{eqn:lr}) reduces to the $k$-bonacci recurrence relation.

We shall consider a different set of integer initial values, depending on the applications considered in sections~\ref{s:zeta} and~\ref{s:phi}.

One of the simplest initial values are:
$$
L_{k,r}(0)=L_{k,r}(1)=\cdots=L_{k,r}(k-2)=0, \quad L_{k,r}(k-1)=1.
$$
The corresponding sequence $\{L_{k,r}(n)\}_{n\geq 0}$ is called the \emph{$(k,r)$-Lorenz-Fibonacci sequence of first kind}, or simply 
\emph{$(k,r)$-Lorenz-Fibonacci sequence}.
Note that $\{L_{k,0}\}_{n\geq 0}$ is the well-known $k$-bonacci sequence, introduced in~\cite{miles} and widely studied in different contexts, see for instance~\cite{flores,Koshy,RS,si:1997,BFMT,PW,KP,KL,GL,DD}.
The first values of some of these families of sequences are presented in Table~\ref{tab:1}. Note that the majority of them are not listed in~\cite{OEIS}.

\begin{proposition}\label{prop:Ln}
Let $k\geq 2r+1$, and $k\geq 4$, $r\geq 1$. Then
$$
L_{k,r}(n)=\left\{
\begin{array}{ll}
2^{n-k}, &  k\leq n< 2k-r;\\
2^{n-k}-(n-2k+r+2) 2^{n-2k+r}, &  2k-r\leq n \leq 2k-1.
\end{array}\right.
$$
 \end{proposition}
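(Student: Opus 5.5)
The plan is to compute $L_{k,r}(m)$ directly from the recurrence~(\ref{eqn:lr}), tracking which of the $k$ previous terms are still zero. The key auxiliary quantity is the partial sum
$$T(m):=\sum_{i=k-1}^{m-1} L_{k,r}(i).$$
Write $m=n+k$ with $0\le n\le k-1$. The recurrence involves the indices $n,\dots,n+k-1$: those from $n+r$ to $n+k-1$ carry a plus sign and those from $n$ to $n+r-1$ a minus sign. Since $n\le k-1$, every index $i$ with $n\le i\le k-2$ gives $L_{k,r}(i)=0$. The whole proof then splits according to whether the negative block $n,\dots,n+r-1$ meets the index $k-1$, the first nonzero term.

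\emph{First range, $k\le m\le 2k-r-1$.} This is $n\le k-r-1$, so $n+r-1\le k-2$ and all negatively signed terms vanish. The positive block then contains every index from $k-1$ to $m-1$, with nothing nonzero below it, so $L_{k,r}(m)=T(m)$. I would show by induction on $m$ that $L_{k,r}(m)=2^{m-k}$ and $T(m+1)=2^{m-k+1}$. The base case is $L_{k,r}(k)=L_{k,r}(k-1)=1$, and the inductive step is $T(m+1)=T(m)+L_{k,r}(m)=2T(m)$. This gives the first line of the formula.

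\emph{Second range, $2k-r\le m\le 2k-1$.} Now $n\ge k-r$, so the negative block reaches past $k-1$. Splitting the sum gives
$$L_{k,r}(m)=T(m)-2\sum_{i=k-1}^{m-k+r-1}L_{k,r}(i).$$
The factor $2$ appears because the terms in the negative block were counted in $T(m)$ with a plus sign. The upper index satisfies $m-k+r-1\le k+r-2\le 2k-r-1$, since $k\ge 2r-1$ holds under our hypotheses. Hence every term in the subtracted sum lies in the first range (or equals $L_{k,r}(k-1)=1$), and the sum equals $2^{m-2k+r}$ by the doubling computation above. Therefore
$$L_{k,r}(m)=T(m)-2^{m-2k+r+1}.$$
Comparing this at $m$ and $m+1$ and using $T(m+1)=T(m)+L_{k,r}(m)$ yields the first-order relation
$$L_{k,r}(m+1)=2L_{k,r}(m)-2^{m-2k+r+1},$$
valid for $2k-r\le m<2k-1$. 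The base case is $m=2k-r$, where $T(2k-r)=2^{k-r}$ and so $L_{k,r}(2k-r)=2^{k-r}-2$, matching the claimed formula. For the step, one checks that $f(m)=2^{m-k}-(m-2k+r+2)2^{m-2k+r}$ satisfies $2f(m)-2^{m-2k+r+1}=f(m+1)$, which is a one-line computation.

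The only delicate point is the index bookkeeping at the transition $m=2k-r$. There one must verify that the subtracted block lies entirely in the region where the first formula already holds, which is exactly where $k\ge 2r+1$ enters. One must also confirm that no index below $k-1$ contributes, which uses $n\le k-1$. Once that is checked, the rest is the routine induction above.
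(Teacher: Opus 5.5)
Your proof is correct and follows the paper's route. You unwind the recurrence directly, use that the negatively signed terms vanish for $k\le m\le 2k-r-1$, and in the second range separate off the part of the negative block that meets the first nonzero index $k-1$. Your partial sum $T(m)$ and the resulting first-order relation $L_{k,r}(m+1)=2L_{k,r}(m)-2^{m-2k+r+1}$ supply the general inductive step that the paper only works out for $j=0,1$ before saying ``proceeding recursively''. As your own index check shows, that step needs only $k\ge 2r-1$, not the full hypothesis $k\ge 2r+1$ that your closing remark credits.
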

\begin{proof}
Observe that for $k<n\leq 2k-r$, the  terms with a minus sign in  the recurrence relation~(\ref{eqn:lr}) are null. Thus, for  $0\leq j\leq k-r-1$, 
$$
L_{k,r}(k+j)=L_{k,r}(k-1)+L_{k,r}(k)+\cdots+L_{k,r}(k+j-1)=1+1+2+\cdots+2^{j-1}=2^{j}.
$$

For $n=2k-r+j$, with $0\leq j\leq r-1$.
\begin{multline*}
L_{k,r}(2k-r+j)=\underbrace{L_{k,r}(2k-r+j-1)+\cdots + L_{k,r}(k+j)}_{A(j)}+\\-\underbrace{(L_{k,r}(k+j-1) +\cdots +L_{k,r}(k+r-j))}_{B(j)},
\end{multline*}
and not all the elements of $B(j)$ are null.
For $j=0$:
$$
L_{k,r}(2k-r)=L_{k,r}(2k-r-1)+\cdots+L_{k,r}(k)-L_{k,r}(k-1)=2^{k-r-1}+\cdots+2+1-1=2^{k-r}-2.
$$
For $j=1$, we get $L_{k,r}(2k-r+1)=A(1)-B(1)$ with:
$$
A(1)=L_{k,r}(k+1)+L_{k,r}(k+2)+\cdots+L_{k,r}(2k-r)=(2+2^2+\cdots+2^{2k-r-1})+(2^{2k-r}-2)=2(2^{k-r}-2),
$$
and
$
B(1)=L_{k,r}(k)+L_{k,r}(k-1)=1+1=2.
$
It yields $$L_{k,r}(2k-r+1)=A(1)-B(1)=2(2^{k-r+1}-2)-(2)=2^{k-r+1}-6.$$
Proceeding recursively, for $2\leq j\leq r-1$, it yields
$L_{k,r}(2k-r+j)=2^{2k-r+j}-(2+j)2^{j}$.
\end{proof}

\begin{proposition}\label{prop:gf}
Let  $F_{k,r}(x):=\sum_{n\geq 0}L_{k,r}(n)x^n$ be the generating function of the sequence $\{L_{k,r}(n)\}_{n\geq 0}$. 
It is given by 
$$
F_{k,r}(x)=\dfrac{x^{k-1}(1-x)}{1-2x+2x^{k-r+1}-x^{k+1}}
$$
\end{proposition}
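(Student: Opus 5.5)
The standard approach is to first get a rational expression with denominator the reciprocal polynomial of $p_{k,r}$, and then multiply numerator and denominator by $1-x$ to reach the stated form.

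\textbf{Reciprocal polynomial.} Let
$$
q(x):=x^k p_{k,r}(1/x)=1-x-x^2-\cdots-x^{k-r}+x^{k-r+1}+\cdots+x^{k}.
$$
The recurrence~(\ref{eqn:lr}) says that the coefficient of $x^{n+k}$ in $q(x)F_{k,r}(x)$ vanishes for every $n\geq 0$. Indeed, this coefficient is
$$
L_{k,r}(n+k)-\sum_{i=1}^{k-r}L_{k,r}(n+k-i)+\sum_{i=k-r+1}^{k}L_{k,r}(n+k-i),
$$
and this is zero by~(\ref{eqn:lr}). Hence $q(x)F_{k,r}(x)$ is a polynomial of degree at most $k-1$.

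\textbf{The numerator.} The coefficient of $x^m$ for $m\leq k-1$ involves only $L_{k,r}(0),\dots,L_{k,r}(m)$. By the initial values these all vanish except $L_{k,r}(k-1)=1$. So the only surviving term is $x^{k-1}$, with coefficient $1$, and
$$
F_{k,r}(x)=\frac{x^{k-1}}{q(x)}.
$$

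\textbf{Rewriting the denominator.} Multiply numerator and denominator by $1-x$. The sum $x+\cdots+x^{k-r}$ telescopes to
$$
(1-x)(x+\cdots+x^{k-r})=x-x^{k-r+1},
$$
and similarly
$$
(1-x)(x^{k-r+1}+\cdots+x^k)=x^{k-r+1}-x^{k+1}.
$$
Therefore
$$
(1-x)q(x)=1-x-(x-x^{k-r+1})+(x^{k-r+1}-x^{k+1})=1-2x+2x^{k-r+1}-x^{k+1}.
$$
This gives the claimed formula.

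The only delicate point is the bookkeeping of signs and index ranges in $q$. The positive-sign block has exponents $1,\dots,k-r$, and the negative-sign block has exponents $k-r+1,\dots,k$. These ranges must match~(\ref{eqn:lr}), including the edge case $r=0$, where the second block is empty. In that case the formula reduces to the known $k$-bonacci generating function $x^{k-1}(1-x)/(1-2x+x^{k+1})$, which serves as a consistency check.
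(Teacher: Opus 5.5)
Your proof is correct and follows essentially the same route as the paper, which shifts indices in the generating-function identity to obtain $F(x)=x^{k-1}/(1-q(x))$ with $q(x)=(x+\cdots+x^{k-r})-(x^{k-r+1}+\cdots+x^k)$ (so its $1-q(x)$ is exactly your reciprocal polynomial) and then multiplies by $1-x$ to telescope the geometric sums. Your coefficient-extraction version packages the same computation more compactly, and your telescoping step has the correct sign, whereas the paper's intermediate expression for $q(x)$ adds the second fraction where it should subtract it; both arrive at the same final formula.
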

\begin{proof}
In this proof, we simplify the notation, we drop the subindices $(k,r)$, which are fixed throughout the proof.

Using the initial conditions, it follows
$
F(x)=\sum_{m\geq 0}L(n)x^n=x^{k-1}+\sum_{n\geq k} L(n)x^n.
$

The recurrence relation~(\ref{eqn:lr}) yields
\begin{equation}\label{eqn:gf-r2}
\sum_{n\geq 0} L(n+k)x^{n+k}=\sum_{n\geq 0} \sum_{j=r}^{k-1} L(n+j)x^{n+k} - \sum_{n\geq 0}\sum_{j=0}^{r-1} L(n+j)x^{n+k}.
\end{equation}
Considering the left hand side of~(\ref{eqn:gf-r2})
$$
\sum_{n\geq 0} L(n+k)x^{n+k}=F(x)-\sum_{j=0}^{k-1}L(j)x^j=F(x)-x^{k-1}.
$$

Analyzing the first sum of the right hand side of~(\ref{eqn:gf-r2}):
\begin{eqnarray*}
\sum_{n\geq 0} \sum_{j=r}^{k-1} L(n+j)x^{n+k}&=&\sum_{j=r}^{k-1}x^{k-j}\sum_{m\geq j }L(m)x^m\\
&=&\sum_{j=r}^{k-1}x^{k-j}(F(x)-\sum_{m=0}^{j-1} L(m)x^m)\\
&=&F(x)\sum_{j=r}^{k-1}x^{k-j} \,\, \left(\text{Since } L(j)=0, \text{ for }\,0\leq j< k-1\right).
\end{eqnarray*}
By a similar computation, the second term of the right hand side of the equality~(\ref{eqn:gf-r2}) is equal to $F(x)\sum_{j=0}^{r-1}x^{k-j}$.
Hence
$$
F(x)-x^{k-1}= F(x)\left(\sum_{j=r}^{k-1}x^{k-j}-\sum_{j=0}^{r-1}x^{k-j}\right).
$$
Thus
$F(x)=x^{k-1}/(1-q(x))$,
where
$$
q(x)=(x+\cdots+x^{k-r})-(x^{k-r+1}+\cdots+x^k)=\dfrac{x(1-x^{k-r})}{1-x}+\dfrac{x^{k-r+1}(1-x^r)}{1-x}=\dfrac{x-2x^{k-r+1}+x^{k+1}}{1-x}.
$$
So 
$$
F(x)=\dfrac{x^{k-1}}{1-q(x)}=\dfrac{x^{k-1}(1-x)}{1-2x+2x^{k-r+1}-x^{k+1}}.
$$
\end{proof}

We would like to mention that 
the computation of the generating function of a sequence given a linear recurrence relation is a standard technique, however we presented it here for the sake of completeness, and also, the function $F_{k,r}(x)$ has a ``neat" expression.

We remark that the generating function  $F_{k,r}(x)$ can be used in presenting an alternative proof of Proposition~\ref{prop:Ln}.

We also consider a different set  of initial values for the recurrence relation~(\ref{eqn:lr}):
\begin{equation}\label{eqn:ell-ic}
\ell_{k,r}(n)=\left\{
\begin{array}{ll}
2^{n}, &  0\leq n\leq k-r;\\
 2^{n}-(n-k+r+1) 2^{n-k+r}, &  k-r+1\leq n \leq k-1.
\end{array}\right.
\end{equation}
The corresponding sequence
 $\{\ell_{k,r}(n)\}_{n\geq 0}$ is called \emph{$(k,r)$-Lorenz-Fibonacci sequence of second kind}.
 In the following sections we show the combinatorial significance and properties of  these sequences.   The first terms of some of these sequences are listed in Table~\ref{tab:2}.
For $r=0$, clearly $\ell_{k,0}(n)=L_{k,0}(n+k)$, i.e., it is the shifted $k$-bonacci sequence. 

\begin{proposition}\label{prop:gf2}
Let  $\F_{k,r}(x):=\sum_{n\geq 0}\ell_{k,r}(n)x^n$ be the generating function of the sequence $\{\ell_{k,r}(n)\}_{n\geq 0}$. 
It is given by 
$$
\F_{k,r}(x)=\dfrac{(1-x)H(x)}{1-2x+2x^{k-r+1}-x^{k+1}},
$$
where $H(x)$ is the polynomial of degree $k-1$:
\begin{multline*}
H(x)=\sum_{n=1}^{k-1}2^nx^n-\sum_{n=k-r+1}^{k-1} (n-k+r+1)2^{n-k+r}x^n
+\sum_{j=0}^{r-1}x^{k-j}\sum_{n=0}^{j-1} 2^n x^n+\\
-\sum_{j=r}^{k-1}x^{k-j}\left(\sum_{n=0}^{k-r} 2^n x^n +\sum_{n=k-r+1}^{j-1}\left(2^n-(n-k+r+1)2^{n-k+r}\right)x^n\right).
\end{multline*}
\end{proposition}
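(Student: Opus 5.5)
The plan is to repeat the argument of Proposition~\ref{prop:gf} with general initial values. Here the initial values no longer kill the boundary terms, so they will survive and make up the numerator polynomial $H(x)$. I drop the subindices $(k,r)$ throughout and write $\F(x)=\sum_{n\geq 0}\ell(n)x^n$. For $0\leq j\leq k$ let $P_j(x)=\sum_{m=0}^{j-1}\ell(m)x^m$ be the truncation of $\F$ below degree $j$; in particular $P_k(x)$ is the polynomial built from the initial values~(\ref{eqn:ell-ic}).

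First, multiply the recurrence~(\ref{eqn:lr}) by $x^{n+k}$ and sum over $n\geq 0$. The left-hand side is $\F(x)-P_k(x)$. For each $j$, the term $\sum_{n\geq0}\ell(n+j)x^{n+k}$ equals $x^{k-j}\bigl(\F(x)-P_j(x)\bigr)$. This gives
$$
\F(x)-P_k(x)=\F(x)\,q(x)-\sum_{j=r}^{k-1}x^{k-j}P_j(x)+\sum_{j=0}^{r-1}x^{k-j}P_j(x),
$$
where $q(x)=(x+\cdots+x^{k-r})-(x^{k-r+1}+\cdots+x^k)$ is exactly the polynomial from the proof of Proposition~\ref{prop:gf}. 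Hence
$$
\F(x)=\frac{H(x)}{1-q(x)},\qquad H(x)=P_k(x)+\sum_{j=0}^{r-1}x^{k-j}P_j(x)-\sum_{j=r}^{k-1}x^{k-j}P_j(x).
$$
Using $1-q(x)=\bigl(1-2x+2x^{k-r+1}-x^{k+1}\bigr)/(1-x)$, already computed there, gives the stated form.

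Second, substitute the explicit initial values~(\ref{eqn:ell-ic}) into $H$. The term $P_k$ splits as $\sum 2^nx^n$ minus the correction $\sum_{n=k-r+1}^{k-1}(n-k+r+1)2^{n-k+r}x^n$. For $j\leq r-1$, the inequality $k\geq 2r+1$ gives $j-1<k-r$, so $P_j=\sum_{n=0}^{j-1}2^nx^n$. For $r\leq j\leq k-1$, the sum $P_j$ splits at $n=k-r$ into a pure power-of-two part and the corrected part for $k-r+1\leq n\leq j-1$; when $j\leq k-r$ the second range is empty, and the first range should then be read as running only up to $j-1$. The degree claim follows because each term $x^{k-j}P_j$ has degree at most $(k-j)+(j-1)=k-1$, and $P_k$ has degree $k-1$.

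The only real obstacle is index bookkeeping, i.e.\ checking that the ranges in the displayed $H$ match the $P_j$ exactly. Two points need care. One is the truncation of the pure-power part at $\min(j-1,k-r)$. The other is the constant term $\ell(0)=1$: the first sum in the statement starts at $n=1$, so I would verify directly, by comparing the coefficient of $x^0$ (namely $\ell(0)=1$ against $H(0)$), whether that term is absorbed elsewhere or should be included. As a sanity check I would compare the first few coefficients against Table~\ref{tab:2} for a small case such as $k=4$, $r=1$.
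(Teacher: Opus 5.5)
Your main computation is correct and is the one the paper intends. The paper omits the proof, saying only that it repeats Proposition~\ref{prop:gf} with the initial values~(\ref{eqn:ell-ic}). Your identity $\F(x)\bigl(1-q(x)\bigr)=P_k(x)+\sum_{j=0}^{r-1}x^{k-j}P_j(x)-\sum_{j=r}^{k-1}x^{k-j}P_j(x)$ is exactly that calculation.

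The two points you left open are misprints in the statement, not gaps in your argument. Nothing absorbs the constant term: every other summand carries a factor $x^{k-j}$ with $k-j\geq 1$. So $H(0)=P_k(0)=\ell(0)=1$, as it must be since $\F(0)=H(0)$, and the first sum has to start at $n=0$. Likewise the pure power part must stop at $\min(j-1,k-r)$. Read literally, the $j=r$ summand contributes $-2^{k-r}x^{2k-2r}$, nothing cancels it, and $2k-2r\geq k+1$, which contradicts the stated degree.

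The one step of yours that does not go through as written is the exact degree. Your bounds give only $\deg H\leq k-1$. Knowing $\deg P_k=k-1$ is not enough, because each $x^{k-j}P_j$ with $j\geq 1$ also has an $x^{k-1}$ term, with coefficient $\ell(j-1)$. You need the coefficient of $x^{k-1}$, namely $\ell(k-1)-\sum_{m=r-1}^{k-2}\ell(m)+\sum_{m=0}^{r-2}\ell(m)$, to be nonzero. With~(\ref{eqn:ell-ic}) and $\sum_{t=1}^{r-2}(t+1)2^t=(r-2)2^{r-1}$, it equals $1$ for $r=1$ and $-1$ for $r\geq 2$.

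This check really does depend on the initial values, and that is why your sanity check against Table~\ref{tab:2} will fail. That table, like the proof of Theorem~\ref{thm:2}(d), uses $\ell(k-r)=2^{k-r}-1$ instead of the value $2^{k-r}$ prescribed by~(\ref{eqn:ell-ic}); for $(k,r)=(4,1)$ it lists $7$, not $8$. Your general formula for $H$ in terms of the $P_j$ still holds with the table's values. However, with those values the coefficient of $x^{k-1}$ vanishes; for $(4,1)$ one gets $H(x)=1+x+x^2$.
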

The proof of this proposition is similar to  Proposition~\ref{prop:gf}, with the difference of using the new set of initial values~(\ref{eqn:ell-ic}); for this reason, we do not present it.

\begin{table}[h]
    \centering
    \begin{tabular}{|l|c|c|}
\hline
$(k,r)$ & $L_{k,r}(n)$ & OEIS\\ \hline
$(3,0)$ & $0,0,1,1,2,4,7,13,24,44,81,149,\ldots $&A00073 \,  (Tribonacci sequence)\\ \hline
$(4,0)$ & $0,0,0,1,1,2,4,8,15,29,56,108,\ldots$  & A00078 \, (Tetranacci sequence)\\ \hline
$(5,1)$& $0,0,0,0,1,1,2,4,8,14,27,51,96,180\ldots$ & A06516  (shifted)\\ \hline
$(6,1)$& $0,0,0,0,0,1,1,2,4,8,16,32,61,119,232,\ldots$ & Not listed\\ \hline
$(7,1)$&  $0,0,0,0,0,0,1,1,2,4,8,16,32,62,123,243,480\ldots$  &  Not listed  \\ \hline
$(5,2)$& $0,0,0,0,1,1,2,4,6,8,15,23,36,60,\ldots $ &  Not listed\\ \hline
$(6,2)$ & $0,0,0,0,0,1,1,2,4,8,14,26,49,91\ldots $ &  Not listed\\ \hline
$(7,2)$&  $0,0,0,0,0,0,1,1,2,4,8,16,30,58,113,219,424\cdots$    &  Not listed \\ \hline
$(7,3)$ & $0,0,0,0,0,0,1,1,2,4,8,14,26,48,89,163,300\ldots$ &  Not listed\\ \hline
$(8,2)$ & $0,0,0,0,0,0,0,1,1,2,4,8,16,32,62,122,241,475\cdots$ &  Not listed \\ \hline
$(8,3)$ & $0,0,0,0,0,0,0,1,1,2,4,8,16,30,58,112,217,419,\cdots$ &  Not listed\\  \hline
$(9,3)$ & $0,0,0,0,0,0,0,0,1,1,2,4,8,16,32,62,122,240,473,\ldots$&  Not listed \\  \hline
\end{tabular}
\caption{Some examples of $L_{k,r}(n)$:  $(k,r)$-Lorenz-Fibonacci sequences}
    \label{tab:1}
\end{table}

\begin{table}[h]
    \centering
\begin{tabular}{|l|c|c|}
\hline
$(k,r)$ & $\ell_{k,r}(n)$ & OEIS\\ \hline
$(4,1)$ & $1,2,4,7,12,21,36,62,107,184,317,546\ldots$ & Not listed \\ \hline
$(5,1)$& $1,2,4,8,15,28,53,100,188,354,667,1256,\ldots$ & A118870 \\ \hline
$(6,1)$ & $1,2,4,8,16,31,60,117,228,444,864,1682\ldots$ & Not listed \\ \hline
$(6,2)$ & $1,2,4,8,15,28,52,97,180,334,620,\ldots$ & Not listed\\ \hline
$(7,1)$ & $1,2,4,8,16,32,63,124,245,484,956,\ldots$ & Not listed \\ \hline
$(7,2)$ &1,2,4,8,16,31,60,116,225,436,844,1634,\ldots & Not listed \\ \hline
$(8,1)$ & 1,2,4,8,16,32,64,127,252,501,996,1980,\ldots& Not listed \\ \hline
$(8,2)$ & 1,2,4,8,16,32,63,124,244,481,948,1868,\ldots & Not listed \\ \hline
$(8,3)$ & 1,2,4,8,16,31,60,116,224,433,836,1614,\ldots& Not listed \\ \hline
\end{tabular}
\caption{Some examples of $\ell_{k,r}(n)$:  $(k,r)$-Lorenz-Fibonacci sequences of second kind.}
    \label{tab:2}
\end{table}

In the present article, we do not consider or compute the Binet formulae for these families of sequences.
In~\cite{SS}, we showed that the polynomial $p_{k,r}(x)$ has a single dominant root, which is a positive number, say $\rho_{k,r}$. 
Therefore $L_{k,r}(n)$ and $\ell_{k,r}(n)$ grow asymptotically  like  $\rho_{k,r}^n$.

\section{The Lorenz substitutions or morphisms}\label{s:zeta}

In the following lines we introduce some  preliminary notions about substitutions required throughout the article. For a general theory on the subject, see~\cite{BR,queffelec}.

Let $\A$ be a finite set, called the \emph{alphabet} and its elements are called \emph{symbols} or \emph{letters}.
 Denote by $\A^*=\cup_{n\geq 0}\A^n$ the set of all finite words over $\A$, where  $\A^0$ is  the  set formed by the empty word $\varepsilon$. 
Given a word  $U\in\A^*$, we write $|U|$ for its length. 

\smallskip
A  \emph{substitution} or \emph{morphism} on $\A$ is a map $\zeta: \A \to \A^*$. This map extends naturally  to $\A^*$ by  concatenation: for all  $U,V\in\A^*$, we define recursively $\zeta(UV)=\zeta(U)\zeta(V)$.
This map is extended to $\A^{\N}$ the set of one-sided infinite sequences on $\A$ in the following way: If $\vv=v_1v_2\cdots$, we define $\zeta(\vv)=\zeta(v_1)\zeta(v_2)\cdots$. Note we keep the same notation $\zeta$ for all these different maps obtained by the substitution in the different spaces defined in terms of $\A$. We are interested in finding the fixed or periodic points of the substitution, considered as maps on $\A^\N$.

\begin{proposition}[\cite{queffelec}, p. 126]\label{prop:fp}
Let $\zeta$ be a  substitution on the alphabet $\A$, such that
\linebreak
$\lim_{n\to\infty}|\zeta^n(a)|=\infty$, for all $a\in\A$.
Then there exists $\vv\in\A^\N$ and $m\geq 1$ such that $\vv=\zeta^m(\vv)$.
\end{proposition}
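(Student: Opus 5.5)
The plan is to use a pigeonhole argument on first letters. Since $\A$ is finite, so is the set of self-maps of $\A$, and I will work with the map that sends each letter to the first letter of its image.

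First, note that $\zeta(a)\neq\varepsilon$ for every $a\in\A$. Indeed, if $\zeta(a)=\varepsilon$ then $\zeta^n(a)=\varepsilon$ for all $n\ge1$, which contradicts $|\zeta^n(a)|\to\infty$. So we may define $f:\A\to\A$ by letting $f(a)$ be the first letter of $\zeta(a)$. By induction, the first letter of $\zeta^n(a)$ is $f^n(a)$. This holds because $\zeta^{n}(a)=\zeta(\zeta^{n-1}(a))$ begins with $\zeta$ applied to the first letter of $\zeta^{n-1}(a)$.

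Next, fix any $a_0\in\A$. The orbit $a_0,f(a_0),f^2(a_0),\dots$ lies in a finite set, so it is eventually periodic. Hence there exist a letter $a$ in the orbit and some $m\ge1$ with $f^m(a)=a$. Put $\eta=\zeta^m$. Then $\eta(a)=aU$ for some word $U$, and $U$ is nonempty: otherwise $|\eta^n(a)|=1$ for all $n$, contradicting $|\zeta^{mn}(a)|\to\infty$.

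The main step is to verify that the iterates are nested prefixes. Since $\eta(a)=aU$, we get
\[\eta^{n+1}(a)=\eta^n(a)\,\eta^n(U),\]
so $\eta^n(a)$ is a prefix of $\eta^{n+1}(a)$. Because the lengths $|\eta^n(a)|\to\infty$, these prefixes converge in $\A^\N$ (with the product topology) to a unique infinite sequence $\vv$ that has every $\eta^n(a)$ as a prefix.

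Finally, $\eta(\vv)$ has $\eta(\eta^n(a))=\eta^{n+1}(a)$ as a prefix for every $n$, so $\eta(\vv)=\vv$, that is, $\zeta^m(\vv)=\vv$. None of these steps is a real obstacle. The only care needed is to rule out empty images, which is exactly where the growth hypothesis is used, and to check that $\zeta$ acting on infinite sequences respects prefixes.
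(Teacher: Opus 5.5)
Your proof is correct, and it is essentially the standard argument. The paper gives no proof of its own and only cites Queff\'elec, where the result comes from this same pigeonhole on the first-letter map: choose a letter $a$ fixed by a power $f^m$, then take $\vv=\lim_{n\to\infty}\zeta^{mn}(a)$ through the nested prefixes $\zeta^{mn}(a)$. The paper relies on exactly this mechanism later, for instance when it reads off from $\zeta^2(1)=1442$ that $\zeta^2$ has a fixed point beginning with $1$.
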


Note that  if $\vv=\zeta^m(\vv)$ and  $\vv=v_1v_2\cdots$ then  $\zeta^{nm}(v_1)$ is a  prefix of $\vv$  for all  $n\geq 1$. For this reason we   denote
$\vv=\lim_{n\to\infty}\zeta^{mn}(v_1)$.

\smallskip

An important object associated with a morphism $\zeta$ is its matrix $M_\zeta$, defined by $(M_\zeta)_{i,j} = |\zeta(j)|_i$; that is, the number of times the symbol $i$ appears in the word $\zeta(j)$, assuming that the alphabet is ordered as $\A = \{1, \dots, k\}$.
We say that a substitution $\zeta$ is \emph{primitive} if the matrix $M_\zeta$ is primitive, i.e, there exists $m>0$ such that all the entries of $M_\zeta^m$ are positive. The substitutions considered in this article are primitive.

We can associate a directed graph  to the substitution $\zeta$.
The vertices of the graph are the elements of the alphabet. 
Let $a,b\in\A$, if $b$ is present in $\zeta(a)$ then there is an oriented edge from $a$ to $b$. 
This graph is called the graph of the substitution $\zeta$, and it is denoted by $\G_\zeta$.
Clearly, by construction the adjacency matrix of $\G_\zeta$ coincides with the matrix $M_\zeta$.
Note that given a directed graph does not represent uniquely the substitution, since it does not  determine the order of the symbols in $\zeta(a)$, for any $a\in\A$, when its length is larger than $2$.
In order to have unicity, it is necessary  to introduce labels on the edges, However, we do not need this in the present article, for details see~\cite{BR}.
If $\zeta$ is primitive the $\G_\zeta$ is strongly connected.

Given a substitution $\zeta$ satisfying the hypothesis of Proposition~\ref{prop:fp}
 we can consider the integer sequence $\{g_n\}_{n\geq 0}$, given by $g_n=|\zeta^n(a)|$, for $a\in\A$,  $n\geq  1$, and $g_0=|\zeta^0(a)|=|a|=1$; obviously  this sequence depends (in general) on the choice of $a$.

\medskip

In this section,  we consider substitutions associated with the integer sequences introduced in the previous section.
We denote the  substitutions, considered in this section,  as $\zeta_{k,r}$, however we shall omit the subindex $(k,r)$ and use only $\zeta$, whenever the context is clear in order to avoid  a cumbersome notation in the identities.
We show in Theorem~\ref{thm:1} (item  (e)) that the sequence $\{|\zeta_{k,r}^n(1)|\}_{n\geq 0}$ is the shifted  sequence $\{L_{k,r}(m)\}_{m\geq 0}$.

 In order to improve the readability,
we separate the analysis  in three cases: $r=0$, $r=1$ and $r>1$.

\noindent
{\bf The $k$-bonacci case, $(k,0)$:} It corresponds to the well-known $k$-bonacci substitution given by:
$$
1\to 12,\quad 2\to 13, \quad\ldots\quad (k-1)\to 1k,\quad k\to 1,
$$
which has been widely studied, in different contexts, see for instance:~\cite{si:1997,emme,FR,ACS} . 
The characteristic polynomial of $\zeta_{k,0}$ is $p_{k,0}(x)$, since  its companion matrix is $M_{\zeta_{k,0}}$,
for details see the proof of Theorem~\ref{thm:2}-(c).
Its graph is depicted in Figure~\ref{fig:graph-k-bonacci}.
Note that the morphism $\zeta$  has a unique fixed point: $\uu=\lim_{m\to\infty}\zeta^{m}(1)$, since the image of each symbol starts with $1$.
Its main recurrence property is described in the following proposition.

\begin{proposition} For $n\geq 0$,
$$
\zeta^{n+k}(1)=\zeta^{n+k-1}(1)\cdots\zeta^{n+1}(1)\zeta^n(1).
$$
\end{proposition}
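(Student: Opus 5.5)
The plan is to prove a slightly more general identity first and then specialize. For $1\le j\le k$, set
$$
W_j(n):=\zeta^{n+j-1}(1)\,\zeta^{n+j-2}(1)\cdots\zeta^{n}(1),
$$
and prove the auxiliary claim
$$
\zeta^{n+j}(1)=\zeta^{n}\bigl(\zeta^{j}(1)\bigr)\quad\text{with}\quad \zeta^{j}(1)=12\cdots (j+1)\ \ (j\le k-1),\qquad \zeta^{n}(j)=\zeta^{n+j-1}(1)\cdots \zeta^{n}(1)\cdots ?
$$
Rather than guess, I would take the cleaner route: first compute $\zeta^{n}$ on single letters in terms of iterates of $1$.

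\emph{Step 1.} Show that for $1\le j\le k$ and $n\ge 0$,
$$
\zeta^{n+j-1}(1)=\zeta^{n+j-2}(1)\cdots\zeta^{n}(1)\,\zeta^{n}(j).
$$
The proof is by induction on $j$. For $j=1$ the identity is trivial. For the step $j\to j+1$ with $j\le k-1$, we use $\zeta(j)=1(j+1)$. Applying $\zeta^{n}$ gives $\zeta^{n+1}(j)=\zeta^{n}(1)\zeta^{n}(j+1)$. The induction hypothesis at level $n+1$ then gives
$$
\zeta^{n+j}(1)=\zeta^{n+j-1}(1)\cdots\zeta^{n+1}(1)\,\zeta^{n+1}(j),
$$
and substituting yields the claim for $j+1$. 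This is a direct unwinding of the definition, which works because every image $\zeta(j)$ with $j<k$ begins with $1$.

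\emph{Step 2.} Specialize Step 1 to $j=k$ with $n$ replaced by $n+1$:
$$
\zeta^{n+k}(1)=\zeta^{n+k-1}(1)\cdots\zeta^{n+1}(1)\,\zeta^{n+1}(k).
$$
Since $\zeta(k)=1$, we have $\zeta^{n+1}(k)=\zeta^{n}(1)$, and this gives exactly the stated identity.

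The only delicate point is the index bookkeeping in Step 1: the hypothesis has to be applied at level $n+1$ rather than $n$. To keep this straight, I would induct on $j$ with the statement quantified over all $n\ge 0$. A sanity check is to take lengths of both sides. By Proposition~\ref{prop:Ln} in the case $r=0$, or directly from the $k$-bonacci recurrence~(\ref{eqn:lr}), we get $|\zeta^{n}(1)|=L_{k,0}(n+k-1)$, and the lengths of the two sides agree.
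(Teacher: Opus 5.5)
Your Steps 1 and 2 are correct and are essentially the paper's proof: the paper unwinds $\zeta^k(1)=\zeta^{k-1}(1)\zeta^{k-1}(2)=\zeta^{k-1}(1)\zeta^{k-2}(1)\zeta^{k-2}(3)=\cdots$ using $\zeta(j)=1(j+1)$ until it reaches $\zeta(k)=1$, then applies $\zeta^n$; your induction on $j$, uniform in $n$, is that chain written out rigorously. Two side remarks need fixing, though neither affects the argument: the abandoned opening guess $\zeta^j(1)=12\cdots(j+1)$ is false (already $\zeta^2(1)=1213$), and in the sanity check the correct indexing is $|\zeta^n(1)|=L_{k,0}(n+k)$, not $L_{k,0}(n+k-1)$ (e.g.\ $|\zeta(1)|=2=L_{k,0}(k+1)$ while $L_{k,0}(k)=1$).
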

This is a well-known fact, however we present the proof here for the sake of completeness.
\begin{proof}
Consider
\begin{eqnarray*}
\zeta^k(1)&=&\zeta^{k-1}(12)=\zeta^{k-1}(1)\zeta^{k-1}(2)\\
&=&\zeta^{k-1}(1)\zeta^{k-2}(1)\zeta^{k-2}(3)\\
&\vdots & \\
&=&\zeta^{k-1}(1)\zeta^{k-2}(1)\zeta^{k-2}(1)\cdots\zeta(1)\zeta(k)\\
&=& \zeta^{k-1}(1)\zeta^{k-2}(1)\zeta^{k-2}(1)\cdots\zeta(1)1.
\end{eqnarray*}
After applying $n$-times the morphism $\zeta$,  the desired identity is obtained. 
\end{proof}

\begin{figure}
\begin{center}
\VCDraw{
\begin{VCPicture}{(0,-4)(24,5)}
\FixStateDiameter{1.85cm} \ChgStateLabelScale{1}
\State[1]{(2,2)}{A} \State[2]{(6,2)}{B} \State[3]{(10,2)}{C}
 \ChgStateLineStyle{none}
\State[...]{(14,2)}{D} \RstStateLineStyle \State[k-1]{(18,2)}{E} \State[k]{(23,2)}{F}
\ChgEdgeLabelScale{1} \LoopN[.5]{A}{}

\EdgeL{B}{C}{}\EdgeL{C}{D}{}\EdgeL{D}{E}{}
\EdgeL{E}{F}{}

\VArcR{arcangle=10}{A}{B}{}
\VArcL[0.45]{arcangle=10}{B}{A}{}

\VArcL[0.5]{arcangle=35}{C}{A}{}
\VArcL[0.45]{arcangle=45}{D}{A}{}
\VArcL[0.5]{arcangle=50}{E}{A}{}
\VArcL[0.45]{arcangle=55}{F}{A}{}

\end{VCPicture}}
\end{center}
\caption{The  $k$-bonacci graph: $\G_{\zeta_{k,0}}$.
\label{fig:graph-k-bonacci}}
\end{figure}

\medskip

\noindent
{\bf Case $(k,1)$:}
Let $k$ be an integer with $k\geq 4$. We define the substitution $\zeta_{k,1}$, on the alphabet $\{1,\ldots, k\}$ given by the rules:
\begin{equation*}
1\to 32,\quad 2\to 42,\quad 3\to 14,\quad 4\to 15,\quad \cdots,(k-2)\to 1(k-1),\quad (k-1)\to 1k,\quad k\to 1.
\end{equation*}

The characteristic polynomial of the matrix $M_{\zeta_{k,1}}$ is $p_{k,1}(x)$, see~\cite[Proposition 6]{SS}.
The graph associated  with the substitution $\zeta_{k,1}$ is depicted in Figure~\ref{fig:k-1}.
Observe that the structure of the graph (and the substitution) does not make sense (it is not primitive) if $k=3$.

\begin{figure}
\begin{center}
\VCDraw{
\begin{VCPicture}{(0,-4)(27,5)}
\FixStateDiameter{1.8cm} \ChgStateLabelScale{0.8}
\State[1]{(2,1)}{A} \State[2]{(6,1)}{B} \State[3]{(10,1)}{C}
\State[4]{(14,1)}{D}
 \ChgStateLineStyle{none}
\State[...]{(17,1)}{E} \RstStateLineStyle \State[k-2]{(19.8,1)}{F} \State[k-1]{(23,1)}{G}\State[k]{(26,1)}{H}

 \LoopN[.5]{B}{}

\EdgeL{A}{B}{}
\EdgeL{C}{D}{}\EdgeL{D}{E}{}
\EdgeL{E}{F}{}\EdgeL{F}{G}{}\EdgeL{G}{H}{}

\VArcR{arcangle=-25}{B}{D}{}

\VArcR{arcangle=79}{A}{C}{}
\VArcL{arcangle=-88}{C}{A}{}\LabelL[0.5]{}


\VArcL[0.45]{arcangle=30}{D}{A}{}
\VArcL[0.5]{arcangle=33}{E}{A}{}
\VArcL[0.5]{arcangle=34}{F}{A}{}
\VArcL[0.5]{arcangle=35}{G}{A}{}
\VArcL[0.5]{arcangle=37}{H}{A}{}

\end{VCPicture}}
\end{center}
\caption{The $(k,1)$ Lorenz-Fibonacci graph: $\G_{\phi_{k,1}}$.
\label{fig:k-1}}
\end{figure}

\begin{proposition}
Let $\zeta=\zeta_{k,1}$ be the substitution defined above, with $k\geq 4$.
\begin{enumerate}
\item[(a)]
The substitution $\zeta$ has only two periodic points: 
$\uu=\lim_{m\to\infty}\zeta^{2m}(1)$ and $\vv=\lim_{m\to\infty}\zeta^{2m}(3)$, with $\zeta(\uu)=\vv$.

\item[(b)]
The recurrence of the iterates of the substitution are given by the relation:
$$
\left\{\begin{array}{l}
\zeta^n(1)=\zeta^{n-2}(1)\zeta^{n-3}(1)\cdots\zeta^{n-k}(1)\zeta^{n-1}(2),\\
\\
 \zeta^n(2)=\zeta^{n-1}(1)\cdots\zeta^{n-k+1}(1)\zeta^{n-1}(2),
 \end{array}\right.
$$
with $n>k\geq5$. And it is not possible to express $\zeta^n(1)$ as juxtaposition of words of the form $\zeta^j(1)$, with $0\leq j\leq n-1$.
\end{enumerate}
\end{proposition}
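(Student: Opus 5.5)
The plan is to get everything from two facts: the first letters of the images, and the ``chain'' structure $3\to 4\to\cdots\to k\to 1$ in $\zeta=\zeta_{k,1}$.

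\textbf{Item (a).} Consider the first-letter map $f:\A\to\A$, where $f(a)$ is the first letter of $\zeta(a)$. Here $f(1)=3$, $f(2)=4$, $f(3)=1$, and $f(j)=1$ for $4\leq j\leq k$. The only periodic cycle of $f$ is $\{1,3\}$, of period $2$. Let $\vv$ be a periodic point, $\zeta^m(\vv)=\vv$, with first letter $a$. Then $f^m(a)=a$, which forces $a\in\{1,3\}$ and $m$ even. Since $\zeta^2(1)=1442$ begins with $1$, and $\zeta^2(3)=\zeta(14)=3215$ begins with $3$, each of $1$ and $3$ is a prefix of its own image under $\zeta^2$. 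Primitivity gives $|\zeta^{n}(a)|\to\infty$, so the limits $\uu=\lim\zeta^{2m}(1)$ and $\vv=\lim\zeta^{2m}(3)$ exist, as in Proposition~\ref{prop:fp}. A periodic point with first letter $a$ is a fixed point of $\zeta^{m}$ for some even $m$, hence of $\zeta^{mn}$ for every $n$. It therefore has $\zeta^{mn}(a)$ as a prefix for all $n$, and so it equals the corresponding limit. This gives uniqueness. Finally,
\[
\zeta(\uu)=\lim\zeta^{2m}(\zeta(1))=\lim \zeta^{2m}(32)=\vv,
\]
because $\zeta^{2m}(3)$ is a prefix of $\zeta^{2m}(32)$ and its length tends to infinity.

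\textbf{Recurrences in (b).} These come from telescoping along the chain, exactly as in the $k$-bonacci proposition. We have $\zeta^n(1)=\zeta^{n-1}(3)\zeta^{n-1}(2)$. Using $j\to 1(j+1)$ for $3\leq j\leq k-1$ and $k\to 1$ repeatedly,
\[
\zeta^{n-1}(3)=\zeta^{n-2}(1)\zeta^{n-2}(4)=\zeta^{n-2}(1)\zeta^{n-3}(1)\zeta^{n-3}(5)=\cdots=\zeta^{n-2}(1)\cdots\zeta^{n-k+1}(1)\zeta^{n-k}(1).
\]
In the same way, $\zeta^n(2)=\zeta^{n-1}(4)\zeta^{n-1}(2)$, and the chain starting at $4$ is one step shorter. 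Since this chain has one step fewer, it yields a block of consecutive $\zeta^{j}(1)$ with lowest index $n-k+1$, followed by $\zeta^{n-1}(2)$. I would cross-check the exact range of indices against the lengths from Proposition~\ref{prop:Ln} before finalizing, because a direct count suggests the block begins at $\zeta^{n-2}(1)$. The restriction $n>k$ only ensures that all exponents are nonnegative.

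\textbf{Non-decomposability.} This is the main obstacle. I would try a suffix/peeling argument first.

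\begin{itemize}
\item Every $\zeta^j(2)$ ends in $2$, since $2\to 42$. Hence $\zeta^j(1)$ ends in $2$ for $j\geq 1$, while $\zeta^0(1)=1$.
\item The letter $2$ occurs only in $\zeta(1)$ and $\zeta(2)$, once in each. Hence $|\zeta^{j}(a)|_2=|\zeta^{j-1}(a)|_1+|\zeta^{j-1}(a)|_2$. This gives a tight count of $2$'s in any block.
\item Suppose $\zeta^n(1)=\zeta^{j_1}(1)\cdots\zeta^{j_s}(1)$ with all $j_i<n$. Comparing prefixes using (a), the first block must be a prefix of $\zeta^{n-1}(3)$. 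By the first-letter parity, $j_1$ is even when $n$ is even (and odd otherwise), and $j_1\leq n-2$.
\item The tail $\zeta^{n-1}(2)$ would then have to be covered by blocks $\zeta^{j}(1)$. This tail begins with $\zeta^{n-2}(4)$, whose first letter $1$ is followed by the image of $5$, not of $3$, while each $\zeta^{j}(1)$ with $j\geq1$ begins with $\zeta^{j-1}(3)$. Comparing the first two levels of $\zeta^{-1}$-desubstitution should produce the contradiction.
\end{itemize}

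If the combinatorial route becomes messy, the fallback is abelianization. The Parikh vector of a juxtaposition is $\sum_i M^{j_i}e_1$. Here $e_1$ is a cyclic vector for $M$, whose characteristic polynomial is $p_{k,1}$. The identity $M^ne_1=\sum_i M^{j_i}e_1$ would then give a polynomial $x^n-\sum_i x^{j_i}$ divisible by $p_{k,1}$. I would then try to rule this out using the negative constant-term structure of $p_{k,1}$ together with the ordering forced by the prefix argument.
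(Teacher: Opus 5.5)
Your argument for (a) is correct. It is the paper's first-letter reasoning, organized more cleanly through the first-letter map.

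The problems are in (b). First, your telescoping is miscounted. Because $k\to 1$ has a single letter, the chain starting from $\zeta^{n-1}(3)$ ends with $\zeta^{n-k+2}(k)=\zeta^{n-k+1}(1)$. Hence
\[
\zeta^{n-1}(3)=\zeta^{n-2}(1)\cdots\zeta^{n-k+1}(1),\qquad \zeta^{n-1}(4)=\zeta^{n-2}(1)\cdots\zeta^{n-k+2}(1),
\]
with $k-2$ and $k-3$ blocks respectively. The true recurrences are therefore $\zeta^n(1)=\zeta^{n-2}(1)\cdots\zeta^{n-k+1}(1)\zeta^{n-1}(2)$ and $\zeta^n(2)=\zeta^{n-2}(1)\cdots\zeta^{n-k+2}(1)\zeta^{n-1}(2)$, which is Theorem~\ref{thm:1}(b) with $r=1$. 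The identities as displayed in the statement are false. For $k=5$, $n=6$ one has $|\zeta^6(1)|=51$ and $|\zeta^6(2)|=47$, whereas the stated right-hand sides have lengths $14+8+4+2+25=53$ and $27+14+8+4+25=78$. The paper's proof makes the same slip: its expansion of $\zeta^{k-2}(4)$ has a spurious trailing $1$ (for $k=5$, $\zeta^3(4)=144232=\zeta^2(1)\zeta(1)$). So your chain for $\zeta^{n-1}(3)$ asserts an incorrect identity. The length cross-check you proposed is exactly what detects this, and what can actually be proved is the corrected pair above.

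Second, non-decomposability is not established. The peeling route tacitly assumes the blocks respect the cut between $\zeta^{n-1}(3)$ and $\zeta^{n-1}(2)$, and nothing forces that. The step ``desubstitution should produce the contradiction'' is not an argument; the paper's own justification here is also only a sketch.

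Your abelian fallback is the right idea and closes quickly, but not via a negative constant term, since $p_{k,1}(0)=+1$. The argument goes as follows.
\begin{itemize}
\item Since $p_{k,1}(0)=1>0>3-k=p_{k,1}(1)$, the matrix $M=M_\zeta$ has an eigenvalue $\lambda\in(0,1)$.
\item A left eigenvector $w$ for $\lambda$ satisfies $w_2+w_3=\lambda w_1$, $w_2+w_4=\lambda w_2$, $w_1+w_{j+1}=\lambda w_j$ for $3\le j\le k-1$, and $w_1=\lambda w_k$.
\item If $w_1=0$, these give successively $w_k=w_{k-1}=\cdots=w_3=0$ and then $w_2=0$. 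Hence $w_1\neq 0$.
\item Pairing $M^ne_1=\sum_i M^{j_i}e_1$ with $w$ yields $\lambda^n=\sum_i\lambda^{j_i}$. This is impossible, since $j_i\le n-1$ gives $\lambda^{j_i}>\lambda^n$ for every block.
\end{itemize}
The same computation shows every left eigenspace is one-dimensional and not orthogonal to $e_1$, so $e_1$ is cyclic. Your divisibility route therefore also works: $x^n-\sum_i x^{j_i}$ has exactly one positive root by Descartes' rule, whereas $p_{k,1}$ has two.
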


\begin{proof}

\textbf{(a):} 
Note that $\zeta^2(1)=\zeta(32)=1442$, so $\zeta^2$ (acting on $\A^N$) has a fixed point which starts with the symbol $1$. 
Similarly $\zeta^2(3)=\zeta(1)\zeta(4)=3215$, so $\zeta^2$ has a fixed point that starts with the symbol $3$.

On the other hand, the word $\zeta(j)=1(j+1)$, for $3\leq j\leq k-1$, and $\zeta(k)=1$ so $\zeta^m(j)$ does not have a fixed point, $4\leq j\leq k-1$, for $m\geq 1$.

For $j=2$, we have $\zeta(2)=42$, and $\zeta^i(4)$ starts with $2$ for $i\geq1$, Hence $\zeta^m$ does not have a fixed point that starts with $2$.
This completes the proof of statement (a).

\bigskip

\noindent
\textbf{(b):} Let $k\geq 5$,
consider $\zeta^k(1)=\zeta^{k-1}(32)=\zeta^{k-1}(3)\zeta^{k-1}(2)=\zeta^{k-2}(1)\zeta^{k-2}(4)\zeta^{k-1}(2)$, and
\begin{equation}\label{eqn:z4}
\zeta^{k-2}(4)=\zeta^{k-3}(1)\zeta^{k-3}(5)=
\cdots=\zeta^{k-3}(1)\zeta^{k-4}(1)\zeta^{k-5}(1)\cdots\zeta(1)1.
\end{equation}
Hence
\begin{equation}\label{eqn:recurrence-1}
\zeta^{k}(1)=\zeta^{k-2}(1)\zeta^{k-3}(1)\zeta^{k-4}(1)\zeta^{k-5}(1)\cdots\zeta(1)1\zeta^{k-1}(2).
\end{equation}
Using~(\ref{eqn:z4}) in the iterates of $2$, it yields
\begin{equation}\label{eqn:recurrence-2}
\zeta^{k-1}(2)=\zeta^{k-2}(4)\zeta^{k-2}(2)=\zeta^{k-3}(1)\zeta^{k-4}(1)\zeta^{k-5}(1)\cdots\zeta(1)1\zeta^{k-2}(2).
\end{equation}
Note that  the word $\zeta^{k-2}(2)$ can not be express   entirely as juxtaposition of $\zeta^j(1)$, with $0\leq j < k-2$, since  
\begin{eqnarray*}
\zeta^{k-2}(2)&=&\zeta^{k-3}(4)\zeta^{k-3}(2)=\zeta^{k-4}(1)\zeta^{k-5}(1)\zeta^{k-5}(1)\cdots \zeta^2(1)1k\,\zeta^{k-3}(2),\\
\zeta^{k-3}(2)&=&\zeta^{k-5}(1)\zeta^{k-5}(1)\cdots \zeta^2(1)1(k-1)\zeta^{k-4}(2),\\
&\vdots& \\
\zeta^2(2) &=& 1542.
\end{eqnarray*}

After  iterating $n$ and  $n+1$ times the identities~(\ref{eqn:recurrence-1}) and~(\ref{eqn:recurrence-2}), respectively; it yields
\begin{align*}
\zeta^{k+n}(1)&=\zeta^{n+k-2}(1)\zeta^{n+k-3}(1)\cdots\zeta^{n+1}(1)\zeta^n(1)\zeta^{n+k-1}(2),\\
\zeta^{k+n}(2)&=\zeta^{n+k-2}(1)\cdots \zeta^{n+1}(1)\zeta^{n+k-1}(2).
\end{align*}
This concludes the proof of  statement (b).
\end{proof}

The \emph{return time of a vertex $v$ in a directed graph} $\G$ is the length of a path on $\G$ starting from $v$ and finishing in $v$. It is straight-forward to check that  the minimal return time of the vertex $1$ in $\G_{\zeta_{k,1}}$ is $3$. 
On the other hand, \emph{the minimal return time of a symbol $a\in\A$ in a substitution $\zeta$} is the smaller $j \geq 1$, such that $\zeta^j(a)$ starts with the symbol $a$. 
The minimal return time of the symbol $1$ for the substitution $\zeta_{k,1}$ is equal to $3$, it coincides with the minimal return time of $1$ in $\G_{\zeta_{k,1}}$. 
If we consider other substitutions that have the same graph, the return time of $1$ increases, we leave the details of this fact to the reader. 
For this reason we chose the substitution $\zeta_{k,1}$ and not the other ones having the same graph.
In the  case  $(k,0)$ the return time of the vertex  $1$ in the graph $\G_{k,0}$ is equal $1$, which is the same return time of $a=1$, for the  substitution $\zeta_{k,0}$. Note that the other substitutions having the  same  graph  as $\G_{k,0}$ the minimal return time of $a=1$ in the substitution is larger.
 
We remark that the  symbol $1$ is important in the applications to the dynamics of the Lorenz attractor, studied in~\cite{SS}, since it is associated with the generator  of the Markov partition of the map.

\medskip

\noindent
{\bf The general case $(k,r)$:} 
Let $k$ be an integer with $k\geq 2r+2$. We define the substitution $\zeta_{k,r}$, on the alphabet $\{1,\ldots, k\}$ as
\begin{multline*}
1\to 32,\quad 2\to 42,\quad 3\to 52,\quad 4\to 62,\quad \cdots,
(2r)\to (2r+2)2,\\
\quad (2r+1)\to 1(2r+2),\quad (2r+2)\to 1(2r+3),\quad \cdots,
\quad (k-1)\to 1k,\quad k\to 1.
\end{multline*}
 Note that the case $(k,r)$ includes all the  previous cases as particular situations.
 Observe that if $k\leq 2r+1$, the graph and the substitution has a very different structure, for this reason we consider only the situation $k\geq 2r+2$.

\begin{figure}
\begin{center}
\VCDraw{
\begin{VCPicture}{(0.5,-3)(29,10)}
\FixStateDiameter{1.8cm} \ChgStateLabelScale{0.8}
\State[k]{(1.5,8)}{A} \State[k-1]{(5,8)}{B}  \ChgStateLineStyle{none}\State[\cdots]{(8,8)}{C}\RstStateLineStyle 
\State[2r+3]{(11,8)}{D}

\State[2r+2]{(14,8)}{E} \State[2r+1]{(17,8)}{F} \State[2r]{(19.6,8)}{G} \ChgStateLineStyle{none}\State[\cdots]{(22.5,8)}{H}\State[\cdots]{(23.4,8)}{I}\RstStateLineStyle 
\State[5]{(25.2,8)}{J}\State[4]{(27.8,8)}{K}

\State[3]{(9,1)}{L}\State[2]{(14,1)}{M}\State[1]{(19,1)}{N}

 \LoopS[.5]{M}{}

\EdgeL{B}{A}{}
\EdgeL{C}{B}{}\EdgeL{D}{C}{}
\EdgeL{E}{D}{}\EdgeL{F}{E}{}
\EdgeL{N}{M}{}\EdgeL{L}{M}{}

\VArcR{arcangle=-40}{G}{E}{}
\VArcR{arcangle=-40}{H}{F}{}
\VArcR{arcangle=-40}{J}{H}{}
\VArcR{arcangle=-40}{K}{I}{}

\VArcR{arcangle=60}{N}{L}{}

\EdgeL{A}{N}{}\EdgeL{B}{N}{}\EdgeL{C}{N}{}\EdgeL{D}{N}{}\EdgeL{E}{N}{}\EdgeL{F}{N}{}

\EdgeL{L}{J}{}

\EdgeL{K}{M}{}\EdgeL{J}{M}{}\EdgeL{H}{M}{}\EdgeL{G}{M}{}
\VArcR{arcangle=-8}{M}{K}{}



\end{VCPicture}}
\end{center}
\caption{The $(k,r)$ Lorenz-Fibonacci graph: $\G_{\zeta_{k,r}}$.
\label{fig:k-r}}
\end{figure}

We consider the example: $k=7$, $r=2$; the substitution $\zeta=\zeta_{7,2}$ is given by
$$
1\to 32,\quad 2\to 42,\quad 3\to 52,\quad 4\to 62, \quad 5\to 16,\quad 6\to 17, \quad 7\to 1.
$$ 
Observe that
\begin{eqnarray*}
1&\to &32\to 5242\to 16426242,\\
3 &\to& 52\to 1642\to 32176242,\\
5 &\to& 16\to 3217\to 5242321.
\end{eqnarray*}
Therefore $\uu_j=\lim_{m\to\infty}\zeta^{3m}(2j-1)$, for $1\leq j\leq 3$ are the periodic points.

Note
$$
\zeta^5(1)=\zeta^2(1)\zeta(1)1\zeta^3(2)\zeta^4(2), \quad \zeta^4(2)=\zeta(1)1\zeta^2(2)\zeta^3(2),
$$
and
$$
\zeta^6(1)=\zeta^3(1)\zeta^2(1)\zeta(1)\zeta^4(2)\zeta^5(2), \quad \zeta^5(2)=\zeta^2(1)\zeta(1)\zeta^3(2)\zeta(1)1\zeta^2(2)\zeta^3(2).
$$
The initial values of the sequence $\{|\zeta^n(1)|\}_n$ are
$|\zeta^n(1)|=2^n$, for $0\leq n\leq 4$,
\begin{eqnarray*}
|\zeta^5(1)|&=&|\zeta^2(1)|+|\zeta(1)|+1+|\zeta^3(2)|+|\zeta(1)|+1+|\zeta^2(2)|+|\zeta^3(2)|\\
&=&2^2+2+1+2^3+2+1+2^2+2^3=30.
\end{eqnarray*}
Similarly $|\zeta^5(1)|=58$.
The first values of the sequence $\{|\zeta^n(1)|\}_n$ are
$$
1,\quad 2,\quad 4,\quad 8, \quad 16,\quad 30,\quad 58,\quad 113,\quad \ldots
$$
On the other hand,   the recurrence of the words of the iterates are given by the identities, where $n\geq 0$:
$$
\begin{array}{c}
\zeta^{n+7}(1)=\zeta^{n+6}(3)\zeta^{n+6}(2)=\zeta^{n+5}(5)\zeta^{n+5}(2)\zeta^{n+6}(2)=\zeta^{n+4}(1)\zeta^{n+3}(1)\zeta^{n+2}(1)\zeta^{n+5}(2)\zeta^{n+6}(2),\\
\\
\zeta^{n+7}(2)=\zeta^{n+6}(4)\zeta^{n+6}(2)=\zeta^{n+5}(6)\zeta^{n+5}(2)\zeta^{n+6}(2)=\zeta^{n+4}(1)\zeta^{n+3}(1)\zeta^{n+5}(2)\zeta^{n+6}(2).
\end{array}
$$

\medskip

\begin{theorem}\label{thm:1}
Let $\zeta=\zeta_{k,r}$ be the substitution defined above, where $k>2r+1$ and $r\geq 1$.
\begin{enumerate}
\item[(a)]
The substitution $\zeta$ does not have fixed points and it has  $r+1$ periodic points: $\uu_1,\uu_2,\ldots,\uu_{r+1}$, such that
$\uu_j=\lim_{m\to\infty}\zeta^{m(r+1)}(2j-1)$, for $1\leq j\leq r+1$, and $ \zeta(\uu_i)=\uu_{i+1}$, for $1\leq i\leq r$ and $\zeta(\uu_{r+1})=\uu_1$.

\item[(b)]
The recurrence of the iterates of the substitution is given by the relation:
$$
\left\{\begin{array}{l}
\zeta^n(1)=\underbrace{\zeta^{n-r-1}(1)\zeta^{n-r-2}(1)\cdots\zeta^{n+r-k}(1)}_{k-2r}\underbrace{\zeta^{n-r}(2)\zeta^{n-r+1}(2)\cdots\zeta^{n-1}(2)}_{r},\\
\\
 \zeta^n(2)=\underbrace{\zeta^{n-r-1}(1)\zeta^{n-r-2}(1)\cdots\zeta^{n+r-k+1}(1)}_{k-2r-1}\underbrace{\zeta^{n-r}(2)\zeta^{n-r+2}(2)\cdots\zeta^{n-1}(2)}_{r}\\
 \end{array}\right.
$$
with $n\geq k$. And it is not possible to express $\zeta^n(1)$ only as juxtaposition of words of the form $\zeta^j(1)$, with $0\leq j\leq n-1$.
\item[(c)] 
The  return time of the symbol $1$ under the substitution $\zeta$ is $r+1$, which is equal to the minimal return time of the vertex $1$ in the graph $\G_{\zeta_{k,r}}$. 
Moreover,  the minimum return time for any other substitution sharing the graph $\G_{\zeta_{k,r}}$ is larger than $r+1$.
\item[(d)] The characteristic polynomial of the substitution matrix $M_{\zeta_{k,r}}$ is $p_{k,r}(x)$. 
\item[(e)] $L_{k,r}(n+k)=|\zeta^n_{k,r}(1)|$.
\end{enumerate}
\end{theorem}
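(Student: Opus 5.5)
The proof will go item by item, using the explicit rules of $\zeta=\zeta_{k,r}$. The basic observation is that the odd letters $1,3,\dots,2r-1$ map to $(2j+1)2$. So the first letters follow the chain $1\mapsto 3\mapsto 5\mapsto\cdots\mapsto 2r+1\mapsto 1$ under "first letter of the image". Even letters $2,4,\dots,2r$ map to $(2j+2)2$, following the chain $2\mapsto 4\mapsto\cdots\mapsto 2r+2\mapsto 1$, and the letters $j\ge 2r+1$ map to words beginning with $1$.

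For (a), apply Proposition~\ref{prop:fp}. The first-letter map sends $1\to3\to\cdots\to 2r+1\to 1$, a cycle of length $r+1$. Every other letter eventually enters this cycle and is not on it. Hence the only letters $a$ with $\zeta^m(a)$ starting with $a$ are the odd letters $2j-1$, $1\le j\le r+1$, and the minimal such $m$ is $r+1$. So there is no fixed point, and the periodic points are $\uu_j=\lim\zeta^{m(r+1)}(2j-1)$. Since $\zeta(2j-1)$ starts with $2j+1$, we get $\zeta(\uu_j)=\uu_{j+1}$ cyclically. Growth $|\zeta^n(a)|\to\infty$ follows from primitivity.

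For (b), unfold as in the $(k,1)$ and $(7,2)$ examples. Write $\zeta^n(1)=\zeta^{n-1}(3)\zeta^{n-1}(2)$, then $\zeta^{n-1}(3)=\zeta^{n-2}(5)\zeta^{n-2}(2)$, and continue until reaching $\zeta^{n-r}(2r+1)$. This peels off the factors $\zeta^{n-r}(2)\cdots\zeta^{n-1}(2)$ in the correct order. Then $\zeta^{n-r}(2r+1)=\zeta^{n-r-1}(1)\zeta^{n-r-1}(2r+2)$, and the $k$-bonacci-type tail $\zeta^{m}(2r+2)=\zeta^{m-1}(1)\zeta^{m-2}(1)\cdots$ terminates at $k\to1$. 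This gives the $k-2r$ powers of $1$. The same unfolding starting from $2\to4\to\cdots\to 2r+2$ gives the formula for $\zeta^n(2)$. The condition $n\ge k$ ensures that all exponents are nonnegative. For the non-expressibility claim, I would argue by lengths or letter content: the last letter of $\zeta^n(1)$ is always $2$ for $n\ge1$, while every $\zeta^j(1)$ ends in $2$ only when $j\geq 1$. A cleaner obstruction is needed, so I would try showing that the suffix $\zeta^{n-1}(2)$ contains letters that cannot be covered by a concatenation of $\zeta^j(1)$ blocks, mirroring the explicit computation done for $\zeta^{k-2}(2)$ in the $(k,1)$ case. This part I expect to be the main obstacle, since it requires an invariant that is actually robust.

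For (c), the graph part is a short path count: from $1$ the out-edges go to $3$ and $2$, and the shortest cycle back to $1$ is $1\to3\to5\to\cdots\to2r+1\to1$, of length $r+1$. For other substitutions with the same graph, the order of letters in each image can vary. The first-letter cycle through $1$ must use the first letters of the images, and any reordering either makes some odd letter start with $2$ or an even one, pushing the route through the longer even chain. I would check that each alternative first-letter choice yields a strictly longer first-letter cycle. For (d), compute $\det(xI-M_\zeta)$ by expanding along the last rows/columns (the chain $2r+1\to\cdots\to k$ is companion-like), or cite \cite[Proposition 6]{SS} generalised. For (e), use $|\zeta^n(1)|=(1,\dots,1)M^n e_1$, so the sequence satisfies the recurrence with characteristic polynomial $p_{k,r}$ by (d) and Cayley–Hamilton. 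It then suffices to match $k$ initial values against Proposition~\ref{prop:Ln}, computing $|\zeta^n(1)|=2^n$ for $n<k-r$ and the corrected values from the unfolding above.
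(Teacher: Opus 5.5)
Your arguments for (a), the two unfolding identities in (b), and (e) (Cayley--Hamilton plus matching $k$ initial values) follow the paper. Part (d) is only gestured at; the paper conjugates $M_{\zeta_{k,r}}$ by the reversal permutation to a matrix whose characteristic polynomial is known from [SS]. There are, however, two genuine gaps.

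\textbf{Non-expressibility in (b).} You leave this open, and the ``letters that cannot be covered'' idea cannot work: by primitivity every letter occurs in $\zeta^j(1)$ for $j$ large, so any obstruction must be positional. The paper itself only gives the heuristic that the factors $\zeta^i(2)$ keep regenerating. The missing idea is a right desubstitution, which turns your own observation about last letters into a proof.

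The image set $\zeta(\A)=\{32,42,\dots,(2r+2)2,\,1(2r+2),\dots,1k,\,1\}$ is a suffix code: no image is a proper suffix of another, and distinct letters have distinct images. Comparing last code words from the right therefore shows that if $\zeta(y)$ is a suffix of $\zeta(x)$, then $y$ is a suffix of $x$. Now take a factorization $\zeta^n(1)=\zeta^{j_1}(1)\cdots\zeta^{j_s}(1)$ with all $j_i\le n-1$; its last block is a suffix of $\zeta^n(1)$.
\begin{itemize}
\item It is not $\zeta^0(1)=1$, since $\zeta^m(1)$ ends in $2$ for every $m\ge1$ (because $\zeta(1)=32$ and $\zeta(2)=42$).
\item If it is $\zeta^j(1)$ with $1\le j\le n-1$, peeling off $\zeta$ $j$ times makes $1$ a suffix of $\zeta^{n-j}(1)$ with $n-j\ge1$, the same contradiction.
\end{itemize}

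\textbf{The ``moreover'' part of (c).} Your plan to check that every alternative ordering yields a strictly longer first-letter cycle will fail, because the claim is false as stated. Reorder the image of a letter off the first-letter cycle of $1$, e.g.\ $2\to24$ instead of $2\to42$, or $(2r+2)\to(2r+3)1$. The result has the same graph $\G_{\zeta_{k,r}}$, and the first-letter orbit of $1$ is still $1\to3\to\cdots\to2r+1\to1$. For $(k,r)=(4,1)$ and $\sigma(2)=24$ one gets $\sigma^2(1)=\sigma(3)\sigma(2)=1424$, which begins with $1$. Your dichotomy (``some odd letter starts with $2$ or an even one'') simply omits these reorderings.

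What your reasoning does prove is a corrected statement. For any substitution with this graph, the first-letter orbit of $1$ is a closed walk through $1$, so the return time is at least $r+1$. Moreover, $1\to3\to\cdots\to2r+1\to1$ is the unique shortest cycle through $1$, since any other cycle passes through $2$ or through $2r+2$ and has length at least $r+2$. Hence equality holds iff the substitution agrees with $\zeta$ on $1,3,\dots,2r+1$. The paper's proof makes the same overclaim: it asserts that swapping $j\to(j+1)1$ for any $2r+1\le j\le k-1$, or swapping an even image, increases the return time. So it is the statement itself that needs weakening.
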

\begin{proof}

\textbf{(a):} 
The substitution does not have a fixed point since the image of each symbol is a different symbol.

Consider the symbol $1$, the first letter of $\zeta^j(1)$ is $2j+1$, for $1\leq j\leq r$ and the image $\zeta{(2r+1)}=1(2r+2)$, therefore the first letter of the word  $\zeta^{r+1}(1)$ is $1$. It yields $\lim_{m\to\infty}\zeta^{(r+1)m}(1)$ is a periodic point of period $r+1$. This point is denoted by $\uu_1$.
Similar analysis is done for the symbols $2j-1$, with $2\leq j\leq r$+1; so, the points
$\uu_j=\lim_{m\to\infty}\zeta^{(r+1)m}(j)$ are periodic points of period $r+1$, and all of them are in the same orbit.

Note that if we take a letter of the form $(2j)$, the first letter of the  image under $\zeta$ is the  $(2j+2)$ (if $1\leq j< r-1$) and eventually  is $1$, and afterwards it never gets an even number as first  symbol. Hence the first letter of all the iterates of $\zeta^m(2j)$ is never $(2j)$, so it is not periodic.

Therefore the only periodic points of $\zeta$ are $\uu_j$, for $1\leq j\leq r+1$.
This proves statement (a).

\bigskip

\noindent
\textbf{(b):} 
Consider $\zeta^n(1)=\zeta^{n-1}(3)\zeta^{n-1}(2)$, and
$$
\zeta^{n-1}(3)=\zeta^{n-2}(5)\zeta^{n-2}(2)=\cdots=\zeta^{n-r}(2r+1)\zeta^{n-r}(2)\zeta^{n-r+1}(2)\cdots\zeta^{n-2}(2).
$$
Computing the recurrence of $\zeta^{n-r}(2r+1)$:
\begin{eqnarray*}
\zeta^{n-r}(2r+1)&=&\zeta^{n-r-1}(1)\zeta^{n-r-1}(2r+2)\\
 &=&\zeta^{n-r-1}(1)\zeta^{n-r-2}(1)\zeta^{n-r-2}(2r+3)\\
 &\vdots& \\
 &=&\zeta^{n-r-1}(1)\zeta^{n-r-2}(1)\cdots\zeta^{n+r-k}(1)
\end{eqnarray*}
Combining the two formulae, it yields
\begin{equation}\label{eqn:zn-1}
\zeta^n(1)=\zeta^{n-r-1}(1)\zeta^{n-r-2}(1)\cdots\zeta^{n+r-k}(1))\zeta^{n-r}(2)\zeta^{n-r+1}(2)\cdots\zeta^{n-1}(2).
\end{equation}

On the other hand: $\zeta^n(2)=\zeta^{n-1}(4)\zeta^{n-1}(2)$, and
$$
\zeta^{n-1}(4)=\zeta^{n-2}(6)\zeta^{n-2}(2)=\cdots=\zeta^{n-r}(2r+2)\zeta^{n-r}(2)\zeta^{n-r+1}(2)\cdots\zeta^{n-1}(2).
$$
Studying:
$$
\zeta^{n-r}(2r+2)=\zeta^{n-r-1}(1)\zeta^{n-r-1}(2r+3)=\cdots=
\zeta^{n-r-1}(1)\zeta^{n-r-2}(1)\cdots\zeta^{n+r-k+1}(1).
$$
Hence
\begin{equation}\label{eqn:zn-2}
 \zeta^n(2)=\zeta^{n-r-1}(1)\zeta^{n-r-2}(1)\cdots\zeta^{n+r-k+1}(1)\zeta^{n-r}(2)\zeta^{n-r+2}(2)\cdots\zeta^{n-1}(2).
 \end{equation}
 The expressions~(\ref{eqn:zn-1}) and~(\ref{eqn:zn-2}) describe the recurrence of $\zeta^n(1)$ and $\zeta^n(2)$.
 
 Observe that the recurrence of $\zeta^m(2)$ always contains expressions of the form $\zeta^i(2)$, thus the factors $\zeta^{n-r}(2),\ldots, \zeta^{n-1}(2)$ in~(\ref{eqn:zn-1}) cannot be reduced to iterates of the letter 1.
 This completes the proof of statement (b).
 
 \bigskip

\noindent
\textbf{(c):} 
The analysis done in (a) shows that the minimal return time of  the letter $1$ under the substitution is $r+1$, since $1$ is the first letter of  $\zeta(j)$, for $2r+1\leq j\leq k-1$. Therefore if the substitution is of the form:
$j \to (j+1)1$, for any $2r+1\leq j\leq k-1$ then the return time of $1$ increases.
Similarly, if $\zeta(i)= 2(2i+1)$ for any $i\in\{1,3,\ldots,2r-1\}$ or $\zeta(i)= 2(2i+2)$ for any $i\in\{2,4,\ldots,2r\}$, the return time of the  symbol $1$ increases, even  it could be infinite.

On the other hand, counting on the graph $\G_{\zeta_{k,r}}$ we observe  that  among the shortest
cycles that start and end in the vertex $1$ are:
$$
1\to3\to 5\to\cdots\to (2r+1)\to 1, \quad 1\to 2\to 4\to\cdots\to (2r+2)\to 1.
$$ 
The length of the first and second cycles are $r+1$ and $r+2$, respectively. Therefore the minimal return time of the vertex $1$ in the graph $\G_{\zeta_{k,r}}$ is $r+1$.
This completes the proof of statement (c).
\bigskip

\noindent
\textbf{(d):} 
An easy computation shows that $P M_{\zeta_{k,r}}P=N(k,r)$, where 
$P=(p_{i,j})$ is the $k\times k$ matrix whose entries are given by 
$p_{i,k-i+1}=1$ for $1\leq i\leq k$, and $p_{i,j}=0$ otherwise. And
 $N(k,r)=\left(
\begin{array}{cc}
N^1 & N^2 \\
N^3 & N^4%
\end{array}%
\right) $ , where

$$N^1=\left[
\begin{array}{ccccc}
0 & 1 & \cdots  & 0 & 0 \\
0 & 0 & \ddots  & 0 & 0 \\
\vdots  & \vdots  & \ddots  & \ddots  & \vdots  \\
0 & 0 & \cdots  & 0 & 1 \\
0 & 0 & \cdots  & 0 & 0%
\end{array}%
\right] _{k-2r-1},\quad
 N^2=\left[
\begin{array}{ccccc}
0 & 0 & 0 & \cdots  & 0 \\
0 & 0 & 0 & \cdots  & 0 \\
\vdots  & \vdots  & \vdots  & \ddots  & \vdots  \\
0 & 0 & 0 & \cdots  & 0 \\
1 & 1 & 0 & \cdots  & 0%
\end{array}%
\right] _{k-2r-1\times 2r+1},$$

$$\ N^3=\left[
\begin{array}{cccc}
0 & 0 & \cdots  & 0 \\
\vdots  & \vdots  & \ddots  & \vdots  \\
0 & 0 & \cdots  & 0 \\
1 & 1 & \ldots  & 1%
\end{array}%
\right] _{2r+1\times k-2r-1}
\quad
\text{ and }\quad 
N^4=\left[
\begin{array}{ccccccc}
0 & 0 & 1 & \cdots  & 0 & 0 & 0 \\
0 & 0 & 0 & \ddots  & 0 & 0 & 0 \\
0 & 0 & 0 & \ddots  & \ddots  & 0 & 0 \\
\vdots  & \vdots  & \vdots  & \ddots  & \ddots  & \ddots  & \vdots  \\
0 & 0 & 0 & \cdots  & 0 & 0 & 1 \\
0 & 1 & 1 & \cdots  & 1 & 1 & 1 \\
1 & 0 & 0 & \cdots  & 0 & 0 & 0%
\end{array}%
\right] _{2r+1}.$$

The characteristic polynomial of the matrix $N(k,r)$ is $p_{k,r}(x)$~(\emph{cf.}~\cite[Theorem 9]{SS}).
Thus,
$$
p_{k,r}(x)=\det(N(k,r)-xI_k)=\det(PM_{\zeta_{k,r}}P-xI_k)=\det(P(M_{\zeta_{k,r}}-xI_k)P)=\det(M_{\zeta_{k,r}}-xI_k).
$$
This completes the proof of statement (d).

\bigskip

\noindent
\textbf{(e):} 
  Since $p_{k,r}(x)$ is the characteristic polynomial of the matrix $M_{\zeta_{k,r}}$, hence the sequence $\{|\zeta_{k,r}^n(1)|\}_{n\geq 0}$ satisfies the linear recurrence relation~(\ref{eqn:lr}). 
  Checking its initial values:
  
  Let denote $b_n=|\zeta_{k,r}^n(1)|$ and $c_n$ the number of occurrence of the symbol $k$ in $\zeta_{k,r}^n(1)$.
  Since the length of the image of any letter different from $k$ is $2$ the length of the image of $k$ is equal $1$, it follows
  $ b_{n+1}=2b_{n}-c_n$.
 Since the first time k occurs in $\zeta^n(1)$ is at $n=k-r-1$, thus $c_n=0$ for 
 $0\leq n<k-r-1$.
 Note that $c_{k-r-1}=2$, because the symbol $(k-1)$ appears twice in $\zeta^{k-r-2}(1)$,  one occurrence is from the images of even symbols and the other from the odd symbols: $\zeta^{k-r-2}(1)=\zeta^{k-r-3}(3)\zeta^{k-r-3}(2)$, following the orbits of the symbols $3$ and $2$:
 $$
 3\to 5\to\cdots\to (2r+1)\to (2r+2)\to\cdots\to (k-1), \quad 2\to 4\to\cdots\to (2r+2)\to\cdots\to (k-1). 
 $$
 After the step $k-r$ the number of $k$'s doubles until it reaches $n=k-1$.
 Hence
 $$
 c_{n}=\left\{\begin{array}{ll}
 0, & 0\leq n < k-r-1;\\
 2, & n=k-r-1;\\
 2^{n-k+r+1}, & k-r\leq n\leq k-2;\\
 2^r-1, & n=k-1.
\end{array}\right.
 $$
 Since $b_0=1$, we get
 $$
 b_{n}=\left\{\begin{array}{ll}
 2^n, & 0\leq n \leq k-r-1;\\
 2^n-(n-k+r+2)2^{n-k+r}, & k-r\leq n\leq k-1.
\end{array}\right.
 $$
 
 Comparing with $L_{k,r}(n)$ in Proposition~\ref{prop:Ln}, it follows 
 $b_n=L_{k,r}(n+k)$, for $0\leq n\leq k-1$.
 This finishes the proof of  statement (e).
\end{proof}

Note that for $2r\leq k\leq 2r+1$ the substitution changes its structure, and some of the properties stated in Theorem~\ref{thm:2} are not valid in these cases.
\section{The direct Lorenz-Fibonacci substitutions}\label{s:phi}

In this section we introduce another  family of substitutions that share the incidence matrix of the substitutions $\zeta_{k,r}$ studied in the previous section, however they have a different combinatorial behaviour. 
We call these morphisms  the \emph{$(k,r)$-direct Lorenz-Fibonacci substitutions}, we denote them by $\phi_{k,r}$,  whenever the context is clear, we only use $\phi$. One important fact is that they are associated with the $(k,r)$-Lorenz-Fibonacci sequences of second kind (Theorem~\ref{thm:2}(d)). 

Let  $k\geq 2r+1$ and $r\geq 0$. We define the substitution $\phi_{k,r}$ on the alphabet $\{1,\ldots,k\}$ as
\begin{multline}\label{eqn:phi}
1\to12,\quad 2\to 13,\quad\cdots,\quad (k-r-1)\to1(k-r),\\
(k-r)\to (k-r+1),\quad (k-r+1)\to (k-r+2),\quad\cdots, \quad k-1\to k,\quad k\to (r+1).
\end{multline}

Note that for $(k,0)$, $\phi_{k,0}=\zeta_{k,0}$, i.e., it is  the classical $k$-bonacci substitution.
The graph associated with the substitution $\phi_{k,1}$ is depicted in Figure~\ref{fig:k-1-d} and the general case, for any $r$ is in Figure~\ref{fig:k-r-d}.

 \begin{figure}
\begin{center}
\VCDraw{
\begin{VCPicture}{(0,-3)(26,5)}
\FixStateDiameter{1.8cm} \ChgStateLabelScale{0.8}
\State[1]{(2,1)}{A} \State[2]{(6,1)}{B} \State[3]{(10,1)}{C}
 \ChgStateLineStyle{none}
\State[...]{(13,1)}{D} \RstStateLineStyle \State[k-2]{(17,1)}{E} \State[k-1]{(21,1)}{F}\State[k]{(25,1)}{G}

 \LoopN[.5]{A}{}

\EdgeL{B}{C}{}\EdgeL{C}{D}{}\EdgeL{D}{E}{}
\EdgeL{E}{F}{}\EdgeL{F}{G}{}

\VArcR{arcangle=10}{A}{B}{}\LabelL[0.5]{}
\VArcL[0.45]{arcangle=10}{B}{A}{}

\VArcL[0.5]{arcangle=35}{C}{A}{}
\VArcL[0.45]{arcangle=45}{D}{A}{}
\VArcL[0.5]{arcangle=50}{E}{A}{}
\VArcL[0.45]{arcangle=-25}{G}{B}{}

\end{VCPicture}}
\end{center}
\caption{The $(k,1)$ direct Lorenz-Fibonacci  graph: $\G_{\phi_{k,1}}$
\label{fig:k-1-d}}
\end{figure}

\begin{figure}
\begin{center}
\VCDraw{
\begin{VCPicture}{(0,0)(26,14)}
\FixStateDiameter{2.2cm} \ChgStateLabelScale{0.65}
\State[1]{(1,8)}{A} \State[2]{(5,8)}{B} \State[3]{(9,8)}{C} 
 \ChgStateLineStyle{none}\State[\cdots]{(12.5,8)}{D}\RstStateLineStyle 
\State[r+1]{(16,8)}{E} 
\ChgStateLineStyle{none}\State[\cdots]{(20,8)}{F}\RstStateLineStyle 
\State[k-r-1]{(24,8)}{G} 

\State[k-r]{(5,4)}{H} \State[k-r+1]{(9,2)}{I}
 \ChgStateLineStyle{none}\State[\cdots]{(13,2)}{J}\RstStateLineStyle 
\State[k-1]{(16,2)}{K}\State[k]{(21,2)}{L}

\LoopS[.5]{A}{}

\EdgeL{B}{C}{}\EdgeL{C}{D}{}\EdgeL{D}{E}{}\EdgeL{E}{F}{}\EdgeL{F}{G}{}
\EdgeL{H}{I}{}\EdgeL{I}{J}{}\EdgeL{J}{K}{}\EdgeL{K}{L}{}
\EdgeL{L}{E}{}\EdgeL{G}{H}{}


\VArcR{arcangle=10}{A}{B}{}
\VArcL[0.45]{arcangle=10}{B}{A}{}

\VArcR{arcangle=-30}{C}{A}{}
\VArcR{arcangle=-33}{D}{A}{}
\VArcR{arcangle=-34}{E}{A}{}
\VArcR{arcangle=-35}{F}{A}{}
\VArcR{arcangle=-36}{G}{A}{}

\end{VCPicture}}
\end{center}
\caption{The $(k,r)$ direct  Lorenz-Fibonacci graph: $\G_{\phi_{k,r}}$.
\label{fig:k-r-d}}
\end{figure}

We consider the example, let $k=7$ and $r=2$, so the substitution $\phi_{7,2}$ is given by:
$$
1\to 12,\quad 2\to 13, \quad 3\to 14,\quad 4\to 15, \quad 5\to 6, \quad 6\to 7,\quad 7\to 3.
$$
Observe
$$
\phi^5(1)=\phi^4(1)\phi^4(2)=\cdots=\phi^4(1)\phi^3(1)\phi^2(1)\phi(1)6
 $$
 Therefore
 $$
 \phi^6(1)=\phi^5(1)\phi^4(1)\phi^3(1)\phi^2(1)7, \quad
\phi^7(1)=\phi^6(1)\phi^5(1)\phi^4(1)\phi^3(1)3.
 $$
Thus,
 $|\phi^n(1)|=2^n$, for $0\leq n\leq 4$,  $|\phi^5(1)|=2^5-1$, and 
 $$|\phi^6(1)|=(2^5-1)+(2^4+2^3+2^2+1)=60$$.
 
 A direct calculation shows that the characteristic polynomial of the matrix $M_{\phi}$ is $p_{7,2}(x)$,
 which implies that 
 the sequence $\{|\phi^n(1)|\}_{n\geq 0}$ satisfies the linear recurrence relation
 $$
 |\phi^{n+7}(1)|=|\phi^{n+6}(1)|+|\phi^{n+5}(1)|+\cdots+|\phi^{n+2}(1)|-|\phi^{n+1}(1)|-|\phi^{n}(1)|,
 $$
 for $n\geq 0$, with the initial values, listed above.
 Therefore the first values of the sequence (as listed in Table~\ref{tab:2}) are
 $$
 1,2,4,8,16,31,60,116,225,436,844,1634,\cdots
 $$
 
 \begin{theorem}\label{thm:2}
 Let $k,r$ be integers with $k\geq 2r+1$, and $r\geq 1$ and $\phi_{k,r}$ be the substitution defined in~(\ref{eqn:phi}).
 \begin{enumerate}
\item[(a)] The substitution $\phi_{k,r}$ has a unique fixed point,  no other periodic orbits.
\item[(b)]  The recurrence of the iterates of the substitutions is given by the relation
$$
\left\{\begin{array}{ccl}
\phi_{k,r}^n(1)&=&\phi_{k,r}^{n-1}(1)\phi_{k,r}^{n-2}(1)\cdots \phi_{k,r}^{n-k+r+1}(1)\phi_{k,r}^{n-k}(r+1)\\
& &\\
\phi_{k,r}^n(r+1)&=&\phi_{k,r}^{n-1}(1)\phi_{k,r}^{n-2}(1)\cdots \phi_{k,r}^{n-k+2r+1}(1)\phi_{k,r}^{n-k+r}(r+1),
\end{array}
\right.
$$
with $n>k$. 
Moreover, it is not possible to express  $\phi_{k,r}^n(1)$ as juxtaposition of words of the form $\phi_{k,r}^j(1)$, with $0\leq j\leq n-1$.
\item[(c)] The characteristic polynomial of the substitution matrix $M_{\phi_{k,r}}$ is $p_{k,r}(x)$.
\item[(d)] The sequence $\{|\phi_{k,r}(n)|\}_{n\geq 0}$ is the $(k,r)$-Lorenz-Fibonacci sequence of the second kind, i.e., $\ell_{k,r}(n)=|\phi_{k,r}^n(1)|$, for all $n\geq 0$.
\end{enumerate}
\end{theorem}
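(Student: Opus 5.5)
I would prove the four items in the order (a), (b), (c), (d), following the pattern of the proof of Theorem~\ref{thm:1}.

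\textbf{(a).} Look at first letters. Every symbol $j\le k-r-1$ satisfies $\phi(j)=1(j+1)$, which starts with $1$; moreover $\phi(1)=12$. Hence $\uu=\lim_{m\to\infty}\phi^m(1)$ is a fixed point. To exclude other periodic points, follow the first letter of $\phi^m(a)$ for $a\ne 1$. If $a\le k-r-1$, the first letter becomes $1$ after one step and then stays $1$. If $a\ge k-r$, the first letter follows the chain $k-r\to k-r+1\to\cdots\to k\to r+1$, and then enters $\{1,\dots,k-r-1\}$ because $r+1\le k-r-1$ (this uses $k\ge 2r+1$, more precisely $k\ge 2r+2$; the boundary case $k=2r+1$ has to be checked separately). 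After that the first letter is $1$ forever. So no $a\ne 1$ returns as a first letter, and the only periodic point is $\uu$.

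\textbf{(b).} Unfold exactly as in the $k$-bonacci proposition. Write $\phi^n(1)=\phi^{n-1}(1)\phi^{n-1}(2)$ and $\phi^{n-1}(2)=\phi^{n-2}(1)\phi^{n-2}(3)$, and continue until reaching $\phi^{n-k+r+1}(1)\,\phi^{n-k+r+1}(k-r)$. Then use $\phi^{r}(k-r)=\phi^{0}(k)$, $\phi(k)=r+1$, so that $\phi^{n-k+r+1}(k-r)=\phi^{n-k}(r+1)$. The same unfolding started from $\phi^n(r+1)=\phi^{n-1}(1)\phi^{n-1}(r+2)$ runs through $k-2r-1$ steps to $\phi^{n-k+2r+1}(k-r)=\phi^{n-k+r}(r+1)$.

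For the non-reducibility claim, iterate the second identity: every expansion of $\phi^m(r+1)$ keeps a trailing factor $\phi^{j}(r+1)$. This factor eventually becomes a single letter among $k-r,\dots,k,r+1$, so $\phi^n(1)$ always ends in a letter other than $1$. The last letter of each $\phi^j(1)$ is, however, a prescribed letter in the suffix chain, so a length and last-letter comparison rules out a decomposition into words $\phi^j(1)$ alone.

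\textbf{(c).} Write $M_\phi$ explicitly and compute $\det(xI-M_\phi)$ by expanding along the structure. Columns $1,\dots,k-r-1$ each have a $1$ in row $1$ and a $1$ in row $j+1$; the remaining columns are a cyclic-shift block closing at row $r+1$. Alternatively, I would find the same conjugation by a permutation matrix to $N(k,r)$ used in Theorem~\ref{thm:1}(d), or to a companion-type matrix. The cleanest route is to note that $b_n=|\phi^n(1)|$ and the letter counts satisfy the linear relations from (b). Taking lengths in the first identity of (b) gives
\[
b_n=b_{n-1}+\cdots+b_{n-k+r+1}+d_{n-k},\qquad d_m=|\phi^m(r+1)|,
\]
and the second identity gives $d_n=b_{n-1}+\cdots+b_{n-k+2r+1}+d_{n-k+r}$. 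Eliminating $d$ with generating functions yields a single recurrence whose characteristic polynomial should be $p_{k,r}(x)$. This elimination, and matching it with the degree-$k$ determinant without spurious factors, is the step I expect to be the main obstacle. I would first try a direct cofactor expansion of $\det(xI-M_\phi)$ along the chain block $k-r,\dots,k$, which collapses to a single entry $x^{r}$ times a minor, and then do an induction on $k-r-1$ for the $k$-bonacci-like top part.

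\textbf{(d).} By Cayley--Hamilton and (c), $b_n$ satisfies the recurrence~(\ref{eqn:lr}). It then suffices to check the initial values $b_n=\ell_{k,r}(n)$ for $0\le n\le k-1$. As in Theorem~\ref{thm:1}(e), use $b_{n+1}=2b_n-e_n$, where $e_n$ is the number of letters of $\phi^n(1)$ lying in $\{k-r,\dots,k\}$. The letter $k-r$ first appears at step $k-r-1$, and $e_n$ grows by the recursion $e_{n+1}=e_n+(\text{new }k-r\text{'s})$ until $n=k-1$. Computing these counts reproduces~(\ref{eqn:ell-ic}).
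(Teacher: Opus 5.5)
Your unfolding of the two identities in (b), the first-letter argument in (a) for $k\ge 2r+2$, and the Cayley--Hamilton reduction in (d) agree with the paper. Your count $b_{n+1}=2b_n-e_n$ is a legitimate alternative to the paper's word decompositions for the initial values. Two steps, however, do not go through.

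\textbf{The ``moreover'' part of (b).} A last-letter comparison cannot work. The last letter of $\phi^j(1)$ is $j+1$ for $j\le k-1$, and afterwards it cycles through $r+1,\dots,k$ with period $k-r$. So $\phi^j(1)$ and $\phi^n(1)$ end with the same letter whenever $j\equiv n \pmod{k-r}$ with $j,n\ge r$, and you never say what the ``length comparison'' is.

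Do not fall back on the paper's argument either: it is false that every $k$ is followed by $r+1$. In $\phi_{7,2}^7(1)=\phi^6(1)\phi^5(1)\cdots$ the final $7$ of $\phi^6(1)$ is followed by $1$.

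What works is desubstitution:
\begin{itemize}
\item For $r\ge1$, the images $1(a+1)$ with $a\le k-r-1$, together with the single letters $k-r+1,\dots,k,r+1$, form a prefix code. So $\phi$ is injective on $\A^*$.
\item In any word $\phi(w)$, every $1$ is immediately followed by a letter $\neq 1$.
\item Suppose $\phi^n(1)=\phi^{j_1}(1)\cdots\phi^{j_m}(1)$ with $n\ge1$ and all $j_i\le n-1$. No block can be $\phi^0(1)=1$, since it would be followed by the initial $1$ of the next block, or would end the word.
\item Hence all $j_i\ge1$, and injectivity gives $\phi^{n-1}(1)=\phi^{j_1-1}(1)\cdots\phi^{j_m-1}(1)$.
\item Descending to $n=1$ yields the contradiction $12\in\{1\}^*$.
\end{itemize}
This is exactly where $r\ge1$ enters: for $r=0$ one has $\phi(k)=1$.

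\textbf{Part (c) is not proved.} The elimination you prefer does go through with no spurious factor. It gives exactly $b_n=\sum_{i=1}^{k-r}b_{n-i}-\sum_{i=k-r+1}^{k}b_{n-i}$, but only for $n\ge 2k-r$. It is also a statement about the sequence $b_n=\mathbf{1}^{T}M_\phi^{n}e_1$, not about $\det(xI-M_\phi)$. So it cannot yield (c), and using it for (d) in place of (c) would force you to check $2k-r$ initial values.

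The paper uses the cofactor expansion you list as a fallback, but it does not collapse to one term. Column $k$ of $xI-M_\phi$ has $x$ in row $k$ and $-1$ in row $r+1$. The $(k,k)$-minor is the companion determinant $x^{k-1}-\cdots-x^{r}$. The $(r+1,k)$-minor is block triangular with determinant $\pm q_r(x)$. Together they give $x^{r+1}q_{k-r-1}(x)-q_r(x)=p_{k,r}(x)$.

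A quicker route uses cycles. The only simple cycles of $\G_{\phi_{k,r}}$ are $1\to\cdots\to j\to 1$ for $1\le j\le k-r-1$, and $r+1\to\cdots\to k\to r+1$. These are disjoint exactly when $j\le r$, and the cycle expansion of the characteristic polynomial then reads off $p_{k,r}$.

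Two smaller points.
\begin{itemize}
\item The boundary case you postponed in (a) is false. For $k=2r+1$, $\phi$ permutes $\{r+1,\dots,2r+1\}$ cyclically, so every sequence over these letters is fixed by $\phi^{r+1}$ (e.g.\ $\phi_{3,1}$: $2\to3\to2$). Statement (a) needs $k\ge 2r+2$, which the paper's proof also overlooks.
\item In (d), carrying out your count gives $e_{k-r-1}=1$, hence $b_{k-r}=2^{k-r}-1$. This matches Table~\ref{tab:2} and the paper's own computation, but not (\ref{eqn:ell-ic}) as printed. The ranges there should read $0\le n\le k-r-1$ and $k-r\le n\le k-1$.
\end{itemize}
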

\begin{proof}
In order to simplify the notation and improve the readability, we omit the subindex $(k,r)$ since these values are fixed, and denote $\phi=\phi_{k,r}(n)$.

\smallskip
\noindent
\textbf{(a):}
There exists a fixed point that starts with $1$ 
since $\phi(i)$ starts with $1$ for $1\leq i\leq k-r-1$, and 
$\lim_{n\to\infty}|\phi^n(1)|=\infty$.
There are no other fixed points since 
$\phi(i)$ does not starts with $i$, for $i\geq k-r$.

Let $\vv=v_1v_2\cdots$,  if $2\leq v_1\leq k-r-1$, notice that  $\vv$  is not a periodic point (of period larger than $1$) since $\phi^n(v_1)$ starts with $v_1$ for any $n$.
Consider the situation $k-r\leq v_1< k$, if $ n\geq k-v_1+2$ then  $\phi^n(v_1)$ starts with $1$;
 if $1\leq n< k-v_1+2$, then  $\phi^n(v_1)=v_1+n$,
 and lastly if $1\leq n< k-v_1+2$ then $\phi^{v_1+1}(v_i)=r+1$. Therefore it does not have periodic point of period larger than $1$.
This completes the proof of statement (a).

\medskip
\noindent
\textbf{(b):}
Consider 
$$
\phi^{k}(1)=\phi^{k-1}(1)\phi^{k-2}(2)=
\cdots=\phi^{k-1}(1)\phi^{k-2}(1)\phi^{k-3}(1)\cdots\phi^{r+1}(1)\phi^{r+1}(k-r),
$$
and
\begin{equation}\label{eqn:phi-(k-r)}
\phi^{r+1}(k-r)=\phi^{r+2}(k-r+1)=\cdots=\phi(k)=r+1.
\end{equation}
Therefore:
\begin{equation}\label{eqn:phi_k-1}
\phi^{k}(1)=\phi^{k-1}(1)\phi^{k-2}(1)\phi^{k-3}(1)\cdots\phi^{k-r-1}(1)(r+1).
\end{equation}
On the other hand,
$$
\phi^{k}(r+1)=\phi^{k-1}(1)\phi^{k-1}(r+2)=\cdots=
\phi^{k-1}(1)\phi^{k-2}(1)\cdots\phi^{2r+1}(1)\phi^{2r+1}(k-r),
$$
and by~(\ref{eqn:phi-(k-r)}), we get $\phi^{2r+1}(k-r)=\phi^{r}(r+1)$.
Hence:
\begin{equation}\label{eqn:phi_k-r+1}
\phi^{k}(r+1)=\phi^{k-1}(1)\phi^{k-2}(1)\cdots\phi^{2r}(1)\phi^{r}(r+1)
\end{equation}
After applying $\phi^n$ to identities~(\ref{eqn:phi_k-1}) and~(\ref{eqn:phi_k-r+1}), and relabelling we get the desired expressions.

Observe that $\phi^n(1)$ cannot be expressed entirely as juxtaposition of $\phi^j(1)$, with $0\leq j\leq n-1$, because the symbol $k$ appears in $\phi^n(1)$, only for $n\geq k-1$, and every occurrence of $k$ is followed by $r+1$.
However in any juxtaposition of words $\phi^j(1)$ the letter following the end of each block is always $1$. Thus, this decomposition is not possible.
Proving  statement (b).

\medskip
\noindent
\textbf{(c):}
Let denote $A_m$ the square matrix of  order $m$ such that the entries for the  first row and the lower co-diagonal are 1 and the rest are 0,
$$A_m=\left(
\begin{array}{ccccc}
1 & 1 & \cdots  & 1 & 1 \\
1 & 0 & \cdots  & 0 & 0 \\
\vdots  & \vdots  & \ddots  & \vdots  & \vdots  \\
0 & 0 & \cdots  & 0 & 0 \\
0 & 0 & \cdots  & 1 & 0%
\end{array}%
\right) .$$

Let be $q_m(x)= (-1)^m\det(A_m-xI_m)$  its characteristic polynomial, here $I_m$ denotes the $m\times m$-identity matrix.
Using Laplace's expansion on the last column we have the recursive form  $q_m(x)=q_{m-1}(x)-1$. Then by induction it is easy to prove that    $q_m(x)=x^m - x^{m-1} - \cdots x-1$.
Moreover,  notice that $A_m$  is the companion matrix of the polynomial $q_m(x)$. Notice that $q_m(x)=p_{m,0}(x)$.

Let  $M_{\phi_{k,r}}$ be the associated matrix with the substitution $\phi_{k,r}$, which is a $k\times k$  matrix and it can decomposed in the following blocks:
$M_{\phi_{k,r}}=\left(
\begin{array}{cc}
A_{r+1}&B_{r,k-r}\\
C_{k-r,r}&D_{k-r}
\end{array}%
\right) $
where $A_{r+1}$  and $D_{k-r-1}$ are  a $r+1$-square  and   $(k-r-1)$-square matrices, respectively,  given by
$$ A_{r+1}=\left(\begin{array}{ccccc}
1 & 1 & \cdots  & 1 & 1 \\
1 & 0 & \cdots  & 0 & 0 \\
\vdots  & \ddots  & \ddots  & \vdots  & \vdots  \\
0 & 0 & \ddots  & 0 & 0 \\
0 & 0 & \cdots  & 1 & 0%
\end{array}%
\right) ,
\quad
D_{k-r-1}:= \left(\begin{array}{ccccc}
0 & 0 & \cdots  & 0 & 0 \\
1 & 0 & \cdots  & 0 & 0 \\
\vdots  & \ddots  & \ddots  & \vdots  & \vdots  \\
0 & 0 & \ddots  & 0 & 0 \\
0& 0 & \cdots  & 1 & 0
\end{array}\right);
 $$
and   $B_{r+1,k-r-1}$   a $(r+1)\times (k-r-1)$-matrix
  having the first $k-2r-2$ entries at the first row  equals to $1$ and $1 $ at the $(r+1,k-r-1)$ entry,  and $C_{k-r-1,r+1}$  is a  $(k-r-1) \times (r+1)$-matrix having $1$ at the $(1,r+1)$ entry and $0's$ everywhere. Namely:

$$
\begin{tikzpicture}
\node (B) at (-4,0) {$B_{r+1,k-r-1}:=$};
\node (comma) at (2.9,0) {$,$};

\matrix (m) [matrix of math nodes,left delimiter=(,right delimiter=)] {
1 & 1 & \cdots & 1 & 0 & \cdots & 0 \\
0 & \cdots & 0 & 0 & \cdots & 0 & 0 \\
\vdots & \ddots & \vdots & \vdots & \ddots & \vdots & \vdots \\
0 & \cdots & 0 & 0 & \cdots & 0 & 0 \\
0 & \cdots & 0 & 0 & \cdots & 0 & 1 \\
};

\draw[decorate,decoration={brace,amplitude=5pt}]
(m-1-1.north west) -- (m-1-4.north east)
node[midway,above=6pt] {$k-2r-2$};
\end{tikzpicture}
$$
and
 $$ 
C_{k-r-1,r+1}:= \left(\begin{array}{ccccc}
0 & 0 & \cdots  & 0 & 1 \\
0 & 0 & \cdots  & 0 & 0 \\
\vdots  & \vdots  & \ddots  & \vdots  & \vdots  \\
0 & 0 & \ddots  & 0 & 0
\end{array}\right).$$

Let $Q_{k,r}(x)=\det(M_{k,r}-x I_{k})$, which is given by 

$$Q_{k,r}(x)=(-1)^k
\det\left(\begin{array}{cccccccccccccc}
1-x   & 1    & \cdots&1     & 1     &0     &\cdots& 0    &0    \\
1     & -x   & \cdots&0     & 0     &0     &\cdots& 0    &0   \\
\vdots&\ddots& \ddots&\vdots& \vdots&\vdots&\vdots&\vdots&\vdots\\
0     & 0    & \ddots& -x   &0      &0     &\cdots& 0    &0\\
0     & 0    & \cdots&1     & -x    &0     &\cdots&0     &1 \\
0     & 0    & \cdots&0     & 1     &-x    &\ddots&0     &0\\
\vdots&\vdots& \ddots&\ddots&\vdots &\vdots&\ddots&\vdots&\vdots\\
0     &0     & \cdots&0     &0      &0     &\ddots&-x    &0\\
0     & 0    & \cdots&     0&0      & 0    &\cdots& 1    &-x
\end{array}\right).$$
By Laplace's expansion using the last column, it follows

\begin{multline*}
Q^r_k(x)=(-1)^k(-1)^{k+r+1}\det\left(\begin{array}{cc}
A_k -x I_k & E_{r,k-1-r}\\
O_{k-1-r,k} & I_{k-1-r} -x D^{\tau}_{k-1-r}
\end{array}\right)+\\
+(-1)^{k+1}x\det\left(\begin{array}{cc}
A_{k-r-1} -x I_{k-r-1} & O_{k-r-1,r}\\
C_{r,k-r-1} & D_r-xI_{r}
\end{array}\right),
\end{multline*}
where $D^\tau$ is the transpose of the matrix $D$, $O_{s,t}$ is the $s\times t$-matrix whose entries are equal zero, and $E_{s,t}$ is the $s\times t$-matrix with entries $e_{i,j}$, such that $e_{1,1}=1$ and $e_{i,j}=0$ otherwise.

It yields
\begin{eqnarray*}
Q_{k,r}(x)&=& (-1)^{r+1}\det(A_r-xI_r)\det(I_{k-1-r} -x D^{\tau}_{k-1-r})+\\
& &+(-1)^{k+1}\det(A_{k-r-1} -x I_{k-r-1})\det( D_r-xI_{r})\\
&=&(-1)^{r+1}\det(A_r-xI_r)-x^{r+1}(-1)^{k+r}\det(A_{k-r-1}-xI_{k-r-1}),
\end{eqnarray*}
thus
$Q_{k,r}(x)= x^{r+1} q_{k-r-1}(x)-q_{r}(x)$.
It follows  $$Q_{k,r}(x)= x^{r+1} (x^{k-r-1}-x^{k-r-2}-\cdots -x-1) -(x^{r}-x^{r-1}-\cdots -x-1)$$
and finally $$Q_{k,r}(x)= x^{k}-x^{k-1}-\cdots -x^{r+2}-x^{r+1} -x^{r}+x^{r-1}+\cdots +x+1=p_{k,r}(x).$$
This completes  the proof of statement (c).

\medskip
\noindent
\textbf{(d):}
By (c) the characteristic polynomial of $M_{\phi_{k,r}}$ is $p_{k,r}(x)$, thus
the sequence $\{|\phi^n(1)|\}_{n\geq 0}$ satisfies the linear recurrence relation given in~(\ref{eqn:lr}).  
Hence, to prove the statement it suffices to show that $|\phi^n(1)|=\ell(n)$, for $0\leq n\leq k-1$, which are listed in~(\ref{eqn:ell-ic}).

From the fact $\phi(j)=1(j+1)$, for $1\leq j\leq k-r-2$, it follows $|\phi^n(1)|=2^n$, with  $0\leq n\leq k-r-1$.
Using a similar analysis done in (b) 
$$
\phi^{k-r}(1)=\phi^{k-r-1}(1)\phi^{k-r-2}(1)\cdots\phi(1)(k-r+1).
$$
It yields
$$
|\phi^{k-r}(1)|=\sum_{i=1}^{k-r-1} |\phi^{k-r-i}(1)|+ 1=2^{k-r}-1.
$$
Consider
$$
\phi^{k-r+1}(1)=\phi^{k-r}(1)\phi^{k-r-1}(1)\cdots\phi^2(1)\phi(1)(k-r+2),
$$
hence 
\begin{eqnarray*}
|\phi^{k-r+1}(1)|&=&|\phi^{k-r}(1)|+\left(|\phi^{k-r-1}(1)|+\cdots+|\phi^2(1)|\right)+|(k-r+2)|\\
&=&(2^{k-r}-1)+(2^{k-r-1}+\cdots+4)+1=2^{k-r+1}-4.
\end{eqnarray*}
Proceeding inductively, for $2\leq j\leq r-1$, it yields
\begin{eqnarray*}
|\phi^{k-r+j}(1)|&=&\left(|\phi^{k-r+j-1}(1)|+\cdots+|\phi^{k-r}(1)|\right)+\left(|\phi^{k-r-1}(1)|+\cdots+|\phi^{j+1}(1)|\right)+|(k-r+j+1)|\\
&=&((2^{k-r+j-1}-j2^{j-1})+\cdots+(2^{k-r}-1))+(2^{k-r-1}+\cdots+2^{j+1})+1\\
&=& (2^{k-r+j-1}+\cdots+2^{j+1})-(j2^{j-1}+\cdots+1)+1\\
&=& 2^{k-r+j}-(j+1)2^{j}.
\end{eqnarray*}
Which coincides with the initial values of $\ell_{k,r}(n)$ listed in Table~(\ref{tab:2}).
Therefore $\ell_{k,r}(n)=|\phi_{k,r}^n(1)|$, for all $n\geq 0$.
This completes the proof of statement (d).
\end{proof}


\section{Some remarks}\label{s:remarks}

\begin{enumerate}
\item 
The sequence $\{L_{k,r}(n)\}_n$ is related to the growth of the elements of the Markov partition for the Lorenz maps whose shifts are associated with the polynomial $p_{k,r}(x)$. However this topic is beyond the scope of the present article.

Another important item to explore is how the substitutions $\zeta_{k,r}$ and $\phi_{k,r}$ are related with the dynamics of the Lorenz maps.

\smallskip
\item
We remark that the graph $\G_{\zeta_{k,r}}$ (Figure~\ref{fig:k-r}) is obtained from the graph that define  the  shift in~\cite{SS} by relabelling the  vertices according to the  rule:
$$
1 \to  k, \quad 2\to (k-1),\quad \cdots\quad, (k-1)\to 2, \quad k\to  1,
$$ and inverting the direction of the  edges.
This fact is revealed in the proof of Theorem~\ref{thm:1}(d), since the matrix that encode the relabelling the indices is the permutation matrix $P$ defined in the proof.

\smallskip

\item The graphs $\G_{\phi_{k,r}}$ and $\G_{\zeta_{k,r}}$ are not isomorphic for $r\geq 1$. Since the in-degrees of the vertices are different, which is an invariant of the graph~\cite{distel}.

Note that in $\G_{\phi_{k,r}}$ the in-degree of the vertex $1$, i.e., the number of incoming edges in the vertex $1$ is $k-r$, whereas the in-degree of the other vertices is equal $1$.
On the graph $\G_{\zeta_{k,r}}$ the in-degree of vertex $1$ is $k-2r$ and the in-degree of the vertex $2$ is $2r-2$.  
Therefore $\G_{\phi_{k,r}}$ and $\G_{\zeta_{k,r}}$ are not isomorphic.

\smallskip

\item We consider the substitution $\phi_{k,r}$ over any other substitution sharing the same graph, i.e., $\G_{\phi_{k,r}}$ since the language generated by the fixed point of $\phi_{k,r}$ has lower complexity than the languages generated by the other substitutions. In the present article, we do not deal with this topic, since it is beyond its scope.

\smallskip

\item For the reason pointed out in the previous item, it is  worth  exploring  the languages generated by the different Lorenz-Fibonacci substitutions.
In particular the study of their complexity function.

\smallskip

\item We would like to remark that an alternative approach to the describe the recurrence of the words $\phi_{k,r}^n(1)$ is to introduce the free group generated by the alphabet, and the corresponding homomorphism induced by the substitution in this free group. However, we did not consider this approach in this article.

\end{enumerate}

\section{Conclusions}\label{s:conclusions}

In this article, we introduced and studied two families of integer sequences the $(k,r)$-Lorenz-Fibonacci sequences of the first and second kind, whose characteristic polynomial is $x^k-x^{k-1}-\cdots- x^r+x^{r-1}+\cdots +x+1$.
These families of sequences include the classical $k$-bonacci sequence, as a special case (when $r=0$).
Moreover they arise naturally in the study of the symbolic dynamics of the Lorenz attractor. 
We computed their generating functions (Propositions~\ref{prop:gf} and~\ref{prop:gf2}).

We introduced two families of substitutions (or morphisms)  on an alphabet of $k$ symbols associated with these sequences.
The $(k,r)$-Lorenz-Fibonacci substitutions ($\zeta_{k,r}$), are linked with the first kind sequences, 
whereas  $(k,r)$-Lorenz-Fibonacci direct substitutions ($\phi_{k,r}$) are linked with the sequences of the second kind.
Both substitutions share the same incidence matrix, however their combinatorial properties are very different, their associated graphs are not isomorphic for $r\geq 1$.
These properties are shown in Theorem~\ref{thm:1} and~\ref{thm:2}, respectively.

There are different  open directions for future research, such as describing in detail the languages (including the complexity function) associated with these substitutions.
The connection of the substitutions $\zeta_{k,r}$ and $\phi_{k,r}$ (and their corresponding dynamical system) with the dynamics of the Lorenz attractor remains an open problem.


\end{document}